\newtheorem{definition}{Definition}[section]
\newtheorem{theorem}[definition]{Theorem}
\newtheorem{lemma}[definition]{Lemma}
\newtheorem{corollary}[definition]{Corollary}
\newtheorem{proposition}[definition]{Proposition}
\theoremstyle{definition}
\newtheorem{remark}[definition]{Remark}
\newtheorem{example}[definition]{Example}
\newcommand{\la}{\left\langle}
\newcommand{\ra}{\right\rangle}
\newcommand{\clB}{\mathcal{B}}
\newcommand{\bbN}{\mathbb{N}}
\newcommand{\bbC}{\mathbb{C}}
\newcommand{\ucpa}{\text{UCP}_A(H)}
\newcommand{\ucpan}{\text{UCP}_A(\bbC^n)}
\newcommand{\ucpann}{\text{UCP}_{M_n(A)}(\bbC^n)}
\newcommand{\di}{\int^\oplus_{\ucpa}}
\newcommand{\diX}{\int^\oplus_{X}}
\newcommand{\disa}{\int^\oplus_{S(A)}}
\newcommand{\acom}{{\rho(A)}^{\prime}}
\begin{document}


\title[Generalized Orthogonal Measures]{Generalized Orthogonal Measures on the Space of Unital Completely Positive Maps}

\author[A.~Bhattacharya]{Angshuman Bhattacharya}
\address{Department of Mathematics, IISER Bhopal, MP 462066, India}
\email{angshu@iiserb.ac.in}

\author[C. J.~Kulkarni]{Chaitanya J. Kulkarni}
\address{Department of Mathematics, IISER Bhopal, MP 462066, India}
\email{kulkarni18@iiserb.ac.in}

\keywords{direct integral of unital completely positive maps, barycentric decomposition, generalized orthogonal measures}
\subjclass{Primary (2020) 46A55, 46B22, 46L45, 46B65; Secondary (2020) 47A20}


\begin{abstract}
A classical result by Effros connects the barycentric decomposition of a state on a C*-algebra to the disintegration theory of the GNS representation of the state with respect to an orthogonal measure on the state space of the C*-algebra. In this note, we take this approach to the space of unital completely positive maps on a C*-algebra with values in $B(H)$, connecting the barycentric decomposition of the unital completely positive map and the disintegration theory of the minimal Stinespring dilation of the same. This generalizes Effros' work in the non-commutative setting. We do this by introducing a special class of barycentric measures which we call \textit{generalized orthogonal} measures.  We end this note by mentioning some examples of \textit{generalized orthogonal} measures.  
\end{abstract}

\maketitle


All C*-algebras considered in this article are unital and separable.
\section{Introduction}

In this article, we use barycentric techniques to determine when a unital completely positive map will admit a diagonal form in the decomposition theory sense. The motivation of this investigation comes from the classical result of E. Effros, which serves as a link between barycentric (integral) representation of a state $\omega$ of a C*-algebra $A$ and the disintegration of the GNS representation $\pi_\omega$ of $A$ on the GNS Hilbert space $H_\omega$. Here, we recall the classical result of E. Effros:

\begin{theorem}\cite[Theorem 4.4.9]{BR1}\label{thm;Effros}
Let $A$ be a C*-algebra and $S(A)$ be the state space of $A$ considered with the weak*-topology. Let $\omega \in S(A)$ and $\mu$ be a Borel probability measure on $S(A)$ with barycenter $\omega$, that is $\omega = \int_{S(A)} \omega^\prime \, \mathrm{d} \mu$. Then, we ge $\mu$ is an orthogonal measure (see Definition \ref{def;omsa}) if and only if $\pi_\omega = \disa \pi_{\omega^\prime} \, \mathrm{d} \mu$, where $\pi_\omega$ and $\pi_{\omega^\prime}$ are corresponding GNS representations of $\omega$ and $\omega^\prime$ respectively.
\end{theorem}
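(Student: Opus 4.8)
I would prove this along the classical route that matches orthogonal measures with abelian subalgebras of the commutant of the GNS representation. Write $(\pi_\omega, H_\omega, \Omega_\omega)$ for the GNS triple of $\omega$; the structural fact used throughout is that $\Omega_\omega$, being cyclic for $\pi_\omega(A)$, is separating for $\pi_\omega(A)'$. The first step is to attach to $\mu$ a candidate map $\kappa_\mu \colon C(S(A)) \to B(H_\omega)$: for $f \in C(S(A))$ with $0 \le f \le 1$ the measure $f\,\mathrm{d}\mu$ has barycenter $\omega_f$ with $0 \le \omega_f \le \omega$, so by the Radon--Nikodym theorem for positive linear functionals there is a unique $\kappa_\mu(f) \in \pi_\omega(A)'$ with $0 \le \kappa_\mu(f) \le 1$ and $\omega_f(a) = \langle \pi_\omega(a)\kappa_\mu(f)\Omega_\omega, \Omega_\omega\rangle$ for all $a \in A$; linear extension gives a unital positive map into $\pi_\omega(A)'$ determined by the identity
\[
\langle \kappa_\mu(f)\,\pi_\omega(a)\Omega_\omega,\ \pi_\omega(b)\Omega_\omega\rangle \;=\; \int_{S(A)} f(\omega')\,\omega'(b^{*}a)\,\mathrm{d}\mu(\omega'), \qquad a,b \in A.
\]
I would then establish the \emph{dictionary}: $\mu$ is an orthogonal measure (Definition \ref{def;omsa}) if and only if $\kappa_\mu$ is multiplicative, in which case $\kappa_\mu$ extends $\sigma$-weakly to a normal $*$-representation of $L^\infty(S(A),\mu)$ which is an isomorphism onto its image $\clB_\mu$, an abelian von Neumann subalgebra of $\pi_\omega(A)'$.

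For the forward implication, assuming $\mu$ is orthogonal I would use that $\clB_\mu$ is an abelian von Neumann algebra on the separable Hilbert space $H_\omega$, so $H_\omega$ decomposes as a direct integral along $\clB_\mu$; since $\kappa_\mu$ identifies $L^\infty(S(A),\mu)$ with $\clB_\mu$, the base space may be taken to be $(S(A),\mu)$ itself, giving $H_\omega = \disa \clH_{\omega'}\,\mathrm{d}\mu$ with $\clB_\mu$ acting diagonally. Because $\pi_\omega(A)$ commutes with $\clB_\mu$ it consists of decomposable operators, hence $\pi_\omega = \disa \rho_{\omega'}\,\mathrm{d}\mu$ for a measurable field of representations $\rho_{\omega'}$ and $\Omega_\omega = \disa \xi_{\omega'}\,\mathrm{d}\mu$ for a measurable field of vectors. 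Feeding these into the identity above and using that $\kappa_\mu(f)$ acts as multiplication by $f$ gives $\langle \rho_{\omega'}(a)\xi_{\omega'}, \rho_{\omega'}(b)\xi_{\omega'}\rangle = \omega'(b^{*}a)$ for $\mu$-almost every $\omega'$; separability of $A$ upgrades this to hold simultaneously for all $a,b$ off one null set, and cyclicity of $\Omega_\omega$ for $\pi_\omega(A)$ — hence for the von Neumann algebra generated by $\pi_\omega(A)$ and $\clB_\mu$ — forces $\xi_{\omega'}$ to be cyclic for $\rho_{\omega'}(A)$ almost everywhere. Uniqueness of the GNS construction then identifies $(\clH_{\omega'}, \rho_{\omega'}, \xi_{\omega'})$ with the GNS triple of $\omega'$ a.e., so $\pi_\omega = \disa \pi_{\omega'}\,\mathrm{d}\mu$.

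For the converse, from $\pi_\omega = \disa \pi_{\omega'}\,\mathrm{d}\mu$ — understood, as in Effros' formulation, to include that $\Omega_\omega$ is carried to the field $\omega' \mapsto \Omega_{\omega'}$ — I would note that multiplication by $L^\infty(S(A),\mu)$ lands inside $\big(\disa \pi_{\omega'}(A)\,\mathrm{d}\mu\big)' = \pi_\omega(A)'$, and that for $f \in C(S(A))$ the multiplication operator $M_f$ satisfies the very identity characterizing $\kappa_\mu(f)$; since $\Omega_\omega$ is separating for $\pi_\omega(A)'$ this gives $M_f = \kappa_\mu(f)$, so $\kappa_\mu$ is a $*$-homomorphism and $\mu$ is orthogonal by the dictionary. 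Concretely, for each Borel set $B \subseteq S(A)$ the diagonal projection $M_{\chi_B} \in \pi_\omega(A)'$ splits $H_\omega$ into orthogonal $\pi_\omega(A)$-invariant subspaces carrying the GNS representations of $\int_B \omega'\,\mathrm{d}\mu$ and of $\int_{S(A)\setminus B} \omega'\,\mathrm{d}\mu$, which is exactly the statement that $\mu$ is orthogonal.

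The substantive work, as I see it, is not in either implication per se but in the two auxiliary pieces: establishing the dictionary — showing $\kappa_\mu$ is well defined and that its multiplicativity is equivalent to orthogonality, which is where the Radon--Nikodym theorem for positive functionals and the separating property of $\Omega_\omega$ are essential — and the measure-theoretic hygiene of the direct integral, namely transporting the decomposition from the abstract Gelfand spectrum of $\clB_\mu$ to $(S(A),\mu)$ via $\kappa_\mu$, ensuring the fields $\omega' \mapsto \rho_{\omega'}$ and $\omega' \mapsto \xi_{\omega'}$ are measurable, and promoting the $\mu$-a.e.\ identities (equality of inner products, cyclicity of the fibre vectors) to uniform statements using separability of $A$. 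The barycenter hypothesis enters only to make the defining identity close up — in particular to force $\omega_f \le \omega$ and to guarantee that the fibre functionals recovered from the direct integral are the $\omega'$ themselves; once the direct integral is in hand the remainder is fibrewise uniqueness of the GNS triple.
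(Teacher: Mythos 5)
Your argument is correct, but note first that the paper does not prove this statement at all --- it is quoted from Bratteli--Robinson as the classical motivation, and the paper's own work is the analogous machinery for UCP maps in Section \ref{S3}. Measured against that machinery (and against the proof in [BR1]), your two auxiliary pieces line up only partially. The construction of $\kappa_\mu$ via the commutant Radon--Nikodym theorem and the dictionary ``$\mu$ orthogonal $\iff$ $\kappa_\mu$ multiplicative'' is exactly the route of Lemma \ref{lem;kmu} and Proposition \ref{prop;om} (with Arveson's theorem replacing the scalar Radon--Nikodym theorem), and your converse direction --- reading off the projections $M_{\chi_B}$ from the diagonal algebra --- matches the paper's. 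Where you genuinely diverge is the forward implication: you apply the abstract spatial decomposition theorem to $\clB_\mu$, transport the base space to $(S(A),\mu)$ through the isomorphism $\kappa_\mu$, and then identify the fibres with the GNS triples of the $\omega'$ a posteriori via a.e.\ cyclicity of the fibre vectors and uniqueness of the GNS construction. The paper (like [BR1]) instead builds the target direct integral $\disa H_{\omega'}\,\mathrm{d}\mu$ explicitly from the given fibres, defines the canonical isometry $U_\mu(\pi_\omega(a)\Omega_\omega)=\{\pi_{\omega'}(a)\Omega_{\omega'}\}$, and proves that orthogonality is equivalent to surjectivity of $U_\mu$ (Theorem \ref{thm;unitary}, preceded by the measurability checks and the density Lemma \ref{lem;dense}). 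Your route spares you the measurability-of-the-field and surjectivity computations but charges you for them elsewhere: the point-isomorphism of the abstract base space with $(S(A),\mu)$, the promotion of $\la\rho_{\omega'}(a)\xi_{\omega'},\rho_{\omega'}(b)\xi_{\omega'}\ra=\omega'(b^*a)$ and of cyclicity to a single null set, and a measurable selection of the fibrewise GNS unitaries; the explicit-isometry route trades all of that for one norm computation. Two small points you handled correctly and should keep explicit: the localization step needs the vectors $M_f\pi_\omega(a)\Omega_\omega$ (not just $\pi_\omega(a)\Omega_\omega$) to force fibrewise cyclicity, and the converse genuinely requires the convention that the decomposition carries $\Omega_\omega$ to the field $\omega'\mapsto\Omega_{\omega'}$, without which the implication fails.
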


The connection between the barycentric theory and disintegration theory as demonstrated by Theorem \ref{thm;Effros} is due to a special class of representing measures (of an element) on the state space of $A$ called \textit{orthogonal measures}. To achieve our goal, we generalize the notion of orthogonal measures on the state space of C*-algebra $A$ to what we call \textit{generalized} orthogonal measures on the space of unital completely positive(UCP) maps from $A$ to $B(H)$. We fix the following notation for this article:
\begin{equation*}
\ucpa = \{ \phi : A \rightarrow B(H) ~ | ~ \phi ~ \text{is a unital completely positive map} \}.
\end{equation*}

This article is organised as follows: in Section \ref{S2}, we briefly recall some results and definitions that are required for our analysis. In Section \ref{S3}, we introduce the notion of \textit{generalized orthogonal measures} on the compact (in the BW topology) convex set $\ucpa$ as a special representing measure of an element $\phi \in \ucpa$. Then we characterize the set of all generalized orthogonal measures with barycenter $\phi$ among all representing measures of $\phi$ on $\ucpa$. This characterization leads to the generalization of the notion of orthogonal abelian subalgebras from the classical barycentric theory of state space of a C*-algebra \cite[Section 4.1]{BR1} to the $\ucpa$ context. These \textit{generalized orthogonal abelian subalgebras} reside in the commutant of the minimal Stinespring representation of $\phi \in \ucpa$ in this case. Finally, we conclude the section by proving that there is a bijective correspondence between the set of generalized orthogonal measures and generalized orthogonal abelian subalgebras. We end the article with Section \ref{S4} by giving some examples of unital completely positive maps with ranges in $M_n(\bbC)$ admitting generalized orthogonal measures as their barycentric measures among others.

\section{Preliminaries} \label{S2}
In this section, we briefly recall some results and definitions that will be used throughout the article. First and foremost, we recall the Stinespring dilation theorem. 

\begin{theorem}\cite[Theorem 4.1]{Paulsen}\label{thm;SD}
Let $A$ be a unital C*-algebra and $H$ be a Hilbert space. Let $\phi : A \rightarrow B(H)$ be a completely positive map. Then there exists a Hilbert space $K$, a bounded linear map $V : H \rightarrow K$ and a unital *-homomorphism $\rho : A \rightarrow B(K)$ such that
\begin{equation*}
\phi(a) = V^*\rho(a)V \hspace{10pt} \text{for all $a \in A$}. 
\end{equation*} 
Moreover, the set $\{ \rho(a)Vh ~|~ a \in A, h \in H \}$ spans a dense subspace of $K$.	
\end{theorem}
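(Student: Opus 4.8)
The plan is to run a GNS-type construction, building the dilation space $K$ as a completion of the algebraic tensor product of $A$ and $H$. First I would define a sesquilinear form on the algebraic tensor product $A \otimes H$ by declaring, on elementary tensors,
\[
\langle a \otimes h,\, b \otimes k \rangle = \langle \phi(b^* a)h,\, k \rangle_H ,
\]
and extending sesquilinearly. The essential point --- and the only place where complete positivity is genuinely used --- is that this form is positive semidefinite: for $\xi = \sum_{i=1}^n a_i \otimes h_i$ one computes $\langle \xi, \xi \rangle = \sum_{i,j} \langle \phi(a_i^* a_j)h_j,\, h_i \rangle$, which is precisely the pairing of the vector $(h_1, \dots, h_n) \in H^{\oplus n}$ against the matrix $[\phi(a_i^* a_j)] \in M_n(B(H))$. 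Since $[a_i^* a_j] \geq 0$ in $M_n(A)$ (it has the form $R^* R$ with $R = [a_1, a_2, \dots, a_n]$ regarded as a row) and $\phi$ is completely positive, the matrix $[\phi(a_i^* a_j)]$ is positive, hence $\langle \xi, \xi \rangle \geq 0$.

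By the Cauchy--Schwarz inequality for positive semidefinite forms, the null space $N = \{\xi : \langle \xi, \xi \rangle = 0\}$ is a linear subspace, the form descends to a genuine inner product on $(A \otimes H)/N$, and I take $K$ to be the Hilbert space completion of this quotient. Next, define $\rho(a)$ on $A \otimes H$ by $\rho(a)(b \otimes h) = (ab) \otimes h$. The two things needing verification are that $\rho(a)$ passes to the quotient and that it is bounded there with $\|\rho(a)\| \leq \|a\|$; both follow from the operator-matrix inequality $[\phi(b_i^*\, a^* a\, b_j)] \leq \|a\|^2\, [\phi(b_i^* b_j)]$ in $M_n(B(H))$, which one obtains by applying $\phi^{(n)}$ to the positive element $[\,b_i^*(\|a\|^2 1_A - a^* a) b_j\,]$ of $M_n(A)$. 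A routine computation on elementary tensors then shows that $\rho$ is multiplicative, unital, and satisfies $\rho(a)^* = \rho(a^*)$, so $\rho : A \to B(K)$ is a unital $*$-homomorphism.

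Finally, set $V : H \to K$ by letting $Vh$ be the class of $1_A \otimes h$ (this uses that $A$ is unital); then $V$ is bounded since $\|Vh\|^2 = \langle \phi(1)h,\, h \rangle \leq \|\phi(1)\|\,\|h\|^2$. For $a \in A$ and $h, k \in H$ one has $\langle \rho(a)Vh,\, Vk \rangle = \langle a \otimes h,\, 1_A \otimes k \rangle = \langle \phi(a)h,\, k \rangle_H$, so $V^* \rho(a) V = \phi(a)$ for all $a$. The density statement is then automatic: $\rho(a)Vh$ is the class of $a \otimes h$, and these classes span $(A \otimes H)/N$, which is dense in $K$ by construction. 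I expect the only real obstacle to be the two positivity facts above --- positive semidefiniteness of the defining form, and the norm bound on $\rho(a)$ --- and in each case the point is simply to exhibit the right positive matrix over $A$ to feed into complete positivity; everything else is bookkeeping on elementary tensors. This is, of course, the standard argument; cf. \cite{Paulsen}.
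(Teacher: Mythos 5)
Your construction is correct and is exactly the standard Stinespring argument from the cited source (Paulsen, Theorem 4.1): the paper itself gives no proof, only the citation, and your GNS-type construction on $A\otimes H$ with the two positivity inputs (positive semidefiniteness of the form, and the bound $[\phi(b_i^*a^*ab_j)]\leq\|a\|^2[\phi(b_i^*b_j)]$) is precisely how that reference proceeds. Nothing further to add.
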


We call the triple $(\rho, V, K)$ from Theorem \ref{thm;SD} as the \textit{minimal} Stinespring representation for $\phi$. For a given completely positive map $\phi$, the minimal Stinespring representation is unique upto an unitary equivalence (see \cite[Proposition 4.2]{Paulsen}).

Then we recall Arveson's Radon--Nikodym theorem for completely positive maps \cite{Arveson1}. Let $A$ be a unital C*-algebra and $H$ be a Hilbert space. Let $\text{CP}(A, B(H))$ be the set of all completely positive maps from $A$ to $B(H)$. Let $\phi_1, \phi_2$ belong to $\text{CP}(A, B(H))$. Then the partial order on $\text{CP}(A, B(H))$ is given by $\phi_1 \leq \phi_2$, if $\phi_2 - \phi_1 \in \text{CP}(A, B(H))$. For $\phi \in \text{CP}(A, B(H))$ 
\begin{equation*}
[0, \phi] = \{ \theta \in \text{CP}(A, B(H)) ~|~ \theta \leq \phi \}.
\end{equation*} 
Let $V^* \rho V$ be the minimal Stinespring dilation of $\phi$ as in Theorem \ref{thm;SD}. Corresponding to each operator $T \in \rho(A)^\prime$, define $\phi_T := V^* \rho T V$ to be a map from $A$ to $B(H)$. Then we have the following theorem due to Arveson:

\begin{theorem} \cite[Theorem 1.4.2]{Arveson1} \label{thm;arnd}
The map $T \mapsto \phi_T = V^* \rho T V$ is an affine order isomorphism of the partially ordered convex set of operators $\{T \in \rho(A)^\prime : 0 \leq T \leq  1_K \}$ onto $[0, \phi]$.
\end{theorem}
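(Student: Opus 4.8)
The plan is to verify, in turn, that the assignment $T \mapsto \phi_T$ takes values in $[0,\phi]$, is affine, is injective, is an order isomorphism onto $[0,\phi]$ in both directions, and is surjective. Of these, only surjectivity requires real work; the rest are short arguments built from the minimality in Theorem~\ref{thm;SD}.

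For the first, easy part: if $0 \le T \le 1_K$ lies in $\rho(A)^\prime$, then its positive square root $T^{1/2}$ again lies in $\rho(A)^\prime$ (being a norm limit of polynomials in $T$), so $\phi_T(a) = V^*\rho(a)TV = (T^{1/2}V)^*\rho(a)(T^{1/2}V)$ is manifestly completely positive; applying this to $1_K - T \in \rho(A)^\prime$ shows $\phi - \phi_T = \phi_{1_K - T}$ is completely positive as well, so $\phi_T \in [0,\phi]$. Linearity of $T \mapsto \phi_T$ is immediate, and from it $S \le T$ yields $\phi_T - \phi_S = \phi_{T-S}$ completely positive, i.e.\ $\phi_S \le \phi_T$. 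For injectivity and the reverse order implication the key point is a single observation: if $R = R^* \in \rho(A)^\prime$ and $\phi_R$ is completely positive, then $R \ge 0$. Indeed, for any finite sum $\eta = \sum_i \rho(a_i)Vh_i$ in the dense subspace $K_0 := \operatorname{span}\{\rho(a)Vh : a\in A,\ h\in H\}$, commuting $R$ past $\rho(a_i)$ gives $\langle R\eta,\eta\rangle = \sum_{i,j}\langle \phi_R(a_j^*a_i)h_i,h_j\rangle \ge 0$, because complete positivity of $\phi_R$ makes the operator matrix $\big[\phi_R(a_j^*a_i)\big]_{i,j}$ positive; density of $K_0$ then forces $R \ge 0$. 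Taking $R = T - S$ recovers $\phi_S \le \phi_T \Rightarrow S \le T$, and taking $R = \pm(S-T)$ recovers $\phi_S = \phi_T \Rightarrow S = T$.

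For surjectivity, given $\theta \in [0,\phi]$ I would construct $T$ by hand on $K_0$. Define a sesquilinear form by
\begin{equation*}
\Big[\,\textstyle\sum_i \rho(a_i)Vh_i,\ \textstyle\sum_j \rho(b_j)Vk_j\,\Big]_\theta \;=\; \sum_{i,j}\langle \theta(b_j^*a_i)h_i,\, k_j\rangle .
\end{equation*}
Complete positivity of $\theta$ makes this positive semidefinite, while complete positivity of $\phi - \theta$, together with the identity $\langle\xi,\xi\rangle = \sum_{i,j}\langle\phi(a_j^*a_i)h_i,h_j\rangle$ for $\xi = \sum_i\rho(a_i)Vh_i \in K_0$, gives $0 \le [\xi,\xi]_\theta \le \langle\xi,\xi\rangle$. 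By Cauchy--Schwarz for positive semidefinite forms this forces $[\xi,\eta]_\theta = 0$ whenever $\langle\xi,\xi\rangle = 0$, so the form is well defined on $K_0$ and, being contractive, extends to a bounded form on $K$; Riesz representation yields $T \in B(K)$ with $\langle T\xi,\eta\rangle = [\xi,\eta]_\theta$ and $0 \le T \le 1_K$. A direct computation shows $\langle T\rho(c)\rho(a)Vh,\rho(b)Vk\rangle$ and $\langle \rho(c)T\rho(a)Vh,\rho(b)Vk\rangle$ both equal $\langle\theta(b^*ca)h,k\rangle$, so $T \in \rho(A)^\prime$; and taking $\eta = Vk = \rho(1)Vk$ gives $\langle\phi_T(a)h,k\rangle = \langle T\rho(a)Vh,Vk\rangle = \langle\theta(a)h,k\rangle$, i.e.\ $\phi_T = \theta$.

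The hard part will be the surjectivity step, and inside it the well-definedness of $[\cdot,\cdot]_\theta$ on $K_0$. This is precisely where one cannot make do with positivity alone: it is the positivity of the operator matrices $\big[\theta(a_j^*a_i)\big]$ and $\big[(\phi-\theta)(a_j^*a_i)\big]$ — that is, the full complete positivity of $\theta$ and of $\phi-\theta$ — that makes the form positive semidefinite and dominated by the inner product of $K$, and it is the minimality of the Stinespring dilation that allows $K_0$ to be regarded as a dense subspace of $K$ in the first place. Once well-definedness is secured, checking $T \in \rho(A)^\prime$, the bounds $0 \le T \le 1_K$, and $\phi_T = \theta$ are routine.
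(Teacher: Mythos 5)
The paper imports this result from Arveson without giving a proof, so there is nothing internal to compare against; your argument is correct and is essentially Arveson's original one (the easy direction via $T^{1/2}V$, order reflection via positivity of $\langle R\eta,\eta\rangle$ on the dense span $\{\rho(a)Vh\}$, and surjectivity by defining the dominated sesquilinear form $[\xi,\eta]_\theta$ on that span and applying Cauchy--Schwarz plus Riesz representation). You correctly isolate the two points where the hypotheses are actually used: complete positivity of $\theta$ and $\phi-\theta$ to get $0\le[\xi,\xi]_\theta\le\langle\xi,\xi\rangle$, and minimality of the dilation to make the span dense and the form well defined.
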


In the next part of this section, we recall some definitions and results from the theory of decomposition of representations of separable C*-algebras. The reader is directed to \cite{DixC, KR2, Takesaki1} for a comprehensive introduction to this topic.

Let $(X,\nu)$ be a standard measure space and let $\{ H_p \}_{p \in X}$ denote a \textit{measurable family} of separable Hilbert spaces \cite[Definition 4.4.1B]{BR1}. The direct integral Hilbert space of the family $\{H_p\}_{p\in X}$ is denoted by 
\begin{equation*}
H=\diX H_p \, \mathrm{d} \nu.
\end{equation*}
The abelian von-Neumann algebra $\text{L}^\infty (X, \nu)$ acts as the algebra of multiplication operators on the direct integral Hilbert space $H$, and are called algebra of \textit{diagonalizable operators} on $H$. For a measurable, essentially bounded family of operators $\{T_p\}_{p\in X}$ \cite[Section 4.4.1]{BR1}, denote the direct integral of $\{T_p\}_{p\in X}$ by
\begin{equation*}
\diX T_p \, \mathrm{d} \nu.
\end{equation*}
The bounded operators of this form on the direct integral Hilbert space $H$ are called \textit{decomposable operators}. The collection of all decomposable operators forms a von-Neumann algebra and is called the algebra of decomposable operators.

Let $A$ be a unital separable C*-algebra and let $\{\rho_p : A \rightarrow B(H_p)\}_{p\in X}$ be a family of representations of $A$ on the measurable family of Hilbert spaces $\{H_p\}_{p\in X}$. The family $\{\rho_p\}_{p\in X}$ is called measurable if for all $a \in A$, the family $\{\rho_p(a)\}_{p\in X}$ is a measurable family of essentially bounded operators. Therefore, for all $a \in A$: 
\begin{equation*}
\diX \rho_p (a) \, \mathrm{d} \nu
\end{equation*}
is a decomposable operator.

Now, let $A$ be a separable unital C*-algebra and $\rho : A \rightarrow B(H)$ be a non-degenerate representation of $A$ on a separable Hilbert space $H$. Let $\acom$ denote the commutant of $\rho(A)$ in $B(H)$. Let $\clB \subseteq \acom$ be an abelian von-Neumann subalgebra of $\acom$. The direct integral decomposition of $\rho$ with respect to $\clB$ is given by the following fundamental theorem for the spatial decomposition of representations:

\begin{theorem}\cite[Theorem 4.4.7]{BR1} \label{thm;dint}
Let $A$ be a separable C*-algebra, $\rho$ be a non-degenerate representation of $A$ on a separable Hilbert space $H$, and $\clB$ be an abelian von-Neumann subalgebra of $\acom$. It follows that there exists a standard measure space $X$, a positive bounded measure $\nu$ on $X$, a measurable family of Hilbert spaces $\{H_p\}_{p\in X}$, a measurable family of representations $\{\rho_p\}_{p\in X}$ on $\{H_p\}_{p\in X}$ and a unitary map 
\begin{equation*}
U : H \rightarrow \diX H_p \, \mathrm{d} \nu
\end{equation*}
such that $U \clB U^*$ is just the set of diagonalizable operators on $\diX H_p \, \mathrm{d} \nu$ and 
\begin{equation*}
U \rho(a) U^* = \diX \rho_p (a) \, \mathrm{d} \nu
\end{equation*} 
for all $a \in A$.
\end{theorem}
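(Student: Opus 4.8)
The plan is to prove the statement in three stages: realise the abelian von Neumann algebra $\clB$ concretely as $L^\infty$ of a standard finite measure space; invoke von Neumann's reduction theory to write $H$ as a direct integral over that space in which $\clB$ becomes the diagonalisable operators and, simultaneously, $\clB^\prime$ becomes the decomposable operators; and finally decompose the representation $\rho$, whose image lies in $\clB^\prime$ because $\clB \subseteq \acom$. For the first stage, separability of $H$ makes the predual of $B(H)$, and hence of $\clB$, separable, so $\clB$ is generated as a von Neumann algebra by a countable family of self-adjoint contractions $\{b_n\}_{n\ge 1}$. Take $X$ to be the Gelfand spectrum of the separable abelian C*-algebra $C^*(\{b_n\},1_H)$ — a compact metrizable, hence standard Borel, space — and identify $C^*(\{b_n\},1_H)\cong C(X)$. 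Choosing vectors $\{\xi_k\}\subseteq H$ with $\sum_k\|\xi_k\|^2<\infty$ whose normal state $\omega=\sum_k\langle\,\cdot\,\xi_k,\xi_k\rangle$ is faithful on $\clB$ (possible since $\clB$ has separable predual), the restriction $\omega|_{C(X)}$ is integration against a finite Borel measure $\nu$ on $X$, and faithfulness upgrades $C(X)\hookrightarrow L^\infty(X,\nu)$ to a normal $*$-isomorphism $\clB\cong L^\infty(X,\nu)$.

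For the second stage, apply the standard spatial disintegration (reduction theory; see \cite{DixC, KR2, Takesaki1}): there is a measurable family of separable Hilbert spaces $\{H_p\}_{p\in X}$ and a unitary $U:H\to\diX H_p\,\mathrm{d}\nu$ carrying $\clB$ onto the algebra of diagonalisable operators $L^\infty(X,\nu)$ and carrying $\clB^\prime$ onto the algebra of decomposable operators. I expect this to be the main obstacle: the simultaneous identifications ``$\clB\leftrightarrow$ diagonalisable'' and ``$\clB^\prime\leftrightarrow$ decomposable'' are precisely the technical heart of reduction theory, and they depend essentially on the measurable-selection machinery that is available only because $H$ is separable and $(X,\nu)$ is standard.

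For the third stage, since $\rho(A)\subseteq\clB^\prime$, the operators $U\rho(a)U^*$ are decomposable, so $U\rho(a)U^* = \diX\rho_p(a)\,\mathrm{d}\nu$ for a measurable field $\{\rho_p(a)\}_{p\in X}$. Fix a norm-dense countable $(\mathbb{Q}+i\mathbb{Q})$-$*$-subalgebra $A_0\subseteq A$. For $a,a^\prime\in A_0$, the relations $\rho(aa^\prime)=\rho(a)\rho(a^\prime)$, $\rho(a^*)=\rho(a)^*$, $\rho(1)=1$, and $\mathbb{Q}$-linearity transfer — by essential uniqueness of decomposable representatives — to the same relations for $\{\rho_p\}$ off a single $\nu$-null set. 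Discarding that null set and extending $A_0$ by norm continuity (each $\rho_p$ being contractive a.e.) yields, for $\nu$-a.e.\ $p$, a $*$-homomorphism $\rho_p:A\to B(H_p)$, and $\{\rho_p\}_{p\in X}$ is a measurable family of representations because it is measurable on the dense set $A_0$. Non-degeneracy of $\rho$ passes to almost every $\rho_p$: if $P_p$ is the projection onto $\overline{\rho_p(A)H_p}$, then $\{P_p\}$ is a measurable field and $\diX P_p\,\mathrm{d}\nu$ is the projection onto $U\,\overline{\rho(A)H}=UH$, whence $P_p=1_{H_p}$ for $\nu$-a.e.\ $p$. Relabelling $U\clB U^*$ and $U\rho(\cdot)U^*$ gives exactly the assertion. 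Stages one and three are essentially the spectral theorem together with routine countable-dense-set arguments; only stage two carries real weight.
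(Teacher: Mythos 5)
The paper gives no proof of this statement: it is quoted verbatim from \cite[Theorem 4.4.7]{BR1} as a preliminary, so there is no internal argument to compare yours against. Your three-stage outline is, in substance, the standard textbook proof (it is essentially how Bratteli--Robinson and Dixmier proceed): realise $\clB$ as $\text{L}^\infty(X,\nu)$ over a standard space via a countable generating family, its Gelfand spectrum, and a faithful normal state; disintegrate $H$ so that $\clB$ becomes the diagonalisable algebra and $\clB^\prime$ the decomposable algebra; then decompose $\rho$ fibrewise using a countable dense $*$-subalgebra $A_0$ and a single discarded null set. Stages one and three are correct and complete enough — you rightly note that the a.e.\ bound $\|\rho_p(a)\|\le\|a\|$ on $A_0$ is what licenses the continuous extension, and the non-degeneracy argument via the measurable field of range projections is sound (though the quoted statement does not actually require it). The one caveat is that stage two, which you candidly flag as ``the technical heart,'' is not an argument but a pointer: the existence of the direct integral realisation diagonalising $\clB$ (via the cyclic decomposition $H=\bigoplus_n\overline{\clB\xi_n}$ and assembly of the fibres $H_p$) together with the commutation theorem identifying the commutant of the diagonalisable operators with the decomposable operators is precisely the content of the theorem being proved. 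As a reduction of the statement to standard reduction theory your proposal is correct and matches the cited sources' strategy; as a self-contained proof it is a sketch whose entire weight rests on the step you defer. One small point worth making explicit in stage one: the extension of $C(X)\hookrightarrow\clB$ to a surjective normal isomorphism $\clB\cong\text{L}^\infty(X,\nu)$ uses both the weak density of $C^*(\{b_n\},1_H)$ in $\clB$ and the normality of the induced map, not faithfulness of $\omega$ alone.
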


We end this section by recalling some definitions from the classical case of orthogonality of the measure on the state space of a C*-algebra.

\begin{definition}\cite[Definition 4.1.20]{BR1}\label{def;op}
Let $\omega_1, \omega_2$ be two positive linear functionals over the C*-algebra $A$ such that $\omega_1 + \omega_2 = \omega$ and  $(\pi_\omega, H_\omega, \Omega_\omega)$ be the GNS-triple corresponding to $\omega$. Then, we say $\omega_1$ is orthogonal to $\omega_2$, if there exists a projection $P \in \pi_\omega(A)^\prime$ such that 
\begin{equation*}
\omega_1(a) = \la \pi_\omega(a) \Omega_\omega, P \Omega_\omega \ra \hspace{10pt} \text{and} \hspace{10pt} \omega_2(a) = \la \pi_\omega(a) \Omega_\omega, (1_{H_\omega} - P) \Omega_\omega \ra.
\end{equation*}
\end{definition}

Let the state space of a C*-algebra $A$ be denoted by $S(A)$. Consider the set $S(A)$ with the weak*-topology. 

\begin{definition}\cite[Definition 4.1.20]{BR1}\label{def;omsa}
Let $\omega$ be a state on $A$ and $\mu$ be a measure with berycenter $\omega$ that is $\omega = \int_{S(A)} \omega^\prime \, \mathrm{d} \mu$. Then, we say that $\mu$ is an orthogonal measure, if for every Borel measurable subset $E$ of $S(A)$, we have 
$\int\limits_{E} \omega^\prime \, \mathrm{d} \mu$ is orthogonal to $\int\limits_{S(A) \setminus E} \omega^\prime \, \mathrm{d} \mu$.
\end{definition}

\section{Generalised Orthogonal Measures} \label{S3}
Let $A$ be a unital, separable C*-algebra and $H$ be a separable Hilbert space. Recall $\ucpa$ is the set of all unital completely positive maps from $A$ to $B(H)$. This set is a compact, convex subset of $\text{CB}(A, B(H))$ with respect to the BW topology. Here, $\text{CB}(A, B(H))$ denotes the set of all completely bounded maps from $A$ to $B(H)$. Suppose $M_1(\ucpa)$ denotes the set of all positive Borel measures on $\ucpa$ with norm 1. For $\phi \in \ucpa$, we define the set
\begin{equation*}
M_\phi(\ucpa) := \left \{ \mu \in M_1(\ucpa) \, \, | \, \, \phi = \int_{\ucpa} \phi^\prime \, \mathrm{d} \mu \right \}
\end{equation*}
where the integral is taken in a weak sense. That is, for all $a \in A$ and $h_1, h_2 \in H$, we have $\la \phi(a) h_1, h_2 \ra = \int_{\ucpa} \la \phi^\prime(a)h_1, h_2 \ra  \, \mathrm{d} \mu$. If this happens, then we say that the $\phi$ is the barycenter of $\mu$. Now, we define the notion of orthogonality between two completely positive maps from $A$ to $B(H)$. 

\begin{definition}\label{def;ocp}
Let $\phi_1, \phi_2$ be two completely positive maps from $A$ to $B(H)$ such that $\phi_1 + \phi_2 = \phi$ and  $V^* \rho V$ be the minimal Stinespring dilation of $\phi$. Then, we say $\phi_1$ is orthogonal to $\phi_2$ and denoted by $\phi_1 \perp \phi_2$, if there exists a projection $P \in \acom$ such that 
\begin{equation*}
\phi_1 =  V^* P \rho V  \hspace{10pt} \text{and} \hspace{10pt} \phi_2 =  V^* (1-P) \rho V.
\end{equation*}
\end{definition}

Using the above definition, for fixed $\phi \in \ucpa$, we state the definition of a generalized orthogonal measure.

\begin{definition}\label{def;om}
Let $\mu \in M_\phi(\ucpa)$. Then, we say that the measure $\mu$ is a generalized orthogonal measure if for every Borel measurable subset $E$ of $\ucpa$ we get 
\begin{equation*}
\int\limits_{E} \phi^\prime \, \mathrm{d} \mu \perp \int\limits_{\ucpa \setminus E} \phi^\prime \, \mathrm{d} \mu.
\end{equation*}
\end{definition}

\begin{remark}
Hereafter in this article, a generalized orthogonal measure will be referred to as an orthogonal measure (on $\ucpa$) for ease of readability without any ambiguity.
\end{remark}

We denote the set of all measures that are orthogonal and with barycenter $\phi$ by $\mathcal{O}_\phi(\ucpa)$. We have the following containment between the three sets:
\begin{equation*}
\mathcal{O}_\phi(\ucpa) \subseteq M_\phi(\ucpa) \subseteq M_1(\ucpa).
\end{equation*}

Let $\phi$ be in $\ucpa$ and $V^* \rho V$ be the minimal Stinespring dilation of $\phi$. For $\mu \in M_\phi(\ucpa)$, the following lemma gives a unique map from $\text{L}^\infty(\ucpa, \mu)$ to $\acom$ satisfying certain conditions. On $\text{L}^\infty(\ucpa, \mu)$, we consider $\sigma(\text{L}^\infty, \text{L}^1)$ topology, that is, $f_\alpha \rightarrow f$ in $\sigma(\text{L}^\infty, \text{L}^1)$ topology if and only if
\begin{equation*}
\int\limits_{\ucpa} f_\alpha g \, \mathrm{d} \mu \rightarrow \int\limits_{\ucpa} f g \, \mathrm{d} \mu
\end{equation*}
for all $g \in \text{L}^1(\ucpa, \mu)$. While $\acom$ is considered with the weak operator topology (WOT).

\begin{lemma}\label{lem;kmu}
Let $\mu \in M_\phi(\ucpa)$ where $V^* \rho V$ be the minimal Stinespring dilation of $\phi$. Then there exists a unique map 
\begin{equation*}
k_\mu : \text{L}^\infty(\ucpa, \mu) \rightarrow \acom
\end{equation*}
given by 
\begin{equation*}
\la k_\mu(f) \rho(a) Vh_1, \rho(b) Vh_2 \ra = \int\limits_{\ucpa} f(\phi^\prime) \la \phi^\prime (b^*a)h_1, h_2 \ra  \, \mathrm{d} \mu
\end{equation*}
where $a, b \in A$ and $h_1, h_2 \in H$. The map is positive, contractive and continuous in $\sigma(\text{L}^\infty, \text{L}^1) - \text{WOT}$. 
\end{lemma}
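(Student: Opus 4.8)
The plan is to construct $k_\mu$ directly on the dense subspace of $K$ spanned by vectors $\rho(a)Vh$, verify that the proposed formula defines a bounded sesquilinear form, and then check the required algebraic properties. First I would fix $f \in \mathrm{L}^\infty(\ucpa, \mu)$ with $f \geq 0$ and consider the sesquilinear form
\begin{equation*}
B_f\big(\textstyle\sum_i \rho(a_i)Vh_i, \sum_j \rho(b_j)Vh_j'\big) = \sum_{i,j} \int_{\ucpa} f(\phi') \la \phi'(b_j^*a_i)h_i, h_j' \ra \, \mathrm{d}\mu.
\end{equation*}
The key point is that the matrix $[\phi'(b_j^*a_i)]_{i,j}$ is positive in $M_n(B(H))$ for each $\phi' \in \ucpa$ by complete positivity, so $\sum_{i,j}\la \phi'(b_j^*a_i)h_i,h_j'\ra \geq 0$ when the two families of vectors coincide; multiplying by $f(\phi') \geq 0$ and integrating preserves positivity, giving $B_f \geq 0$. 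Taking $f = 1$ recovers $\la \phi(b_j^*a_i)h_i,h_j'\ra = \la \rho(b_j^*a_i)Vh_i, Vh_j'\ra = \la \rho(a_i)Vh_i, \rho(b_j)Vh_j'\ra$, i.e. the Gram matrix of the spanning vectors. Combining $0 \leq B_f \leq \|f\|_\infty B_1$ with the Cauchy–Schwarz inequality shows $B_f$ is well-defined (independent of the representation of a vector as a finite sum) and bounded by $\|f\|_\infty$ on the dense subspace, hence extends to a bounded positive operator $k_\mu(f)$ on $K$ with $\|k_\mu(f)\| \leq \|f\|_\infty$. For general $f$ I would decompose into real and imaginary parts and then positive/negative parts, or simply note the sesquilinear estimate $|B_f| \leq \|f\|_\infty B_1$ directly via Cauchy–Schwarz on the measure space.

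Next I would verify $k_\mu(f) \in \acom$: for $c \in A$, compute $\la k_\mu(f)\rho(c)\rho(a)Vh_1, \rho(b)Vh_2\ra$ using the defining formula with $ca$ in place of $a$, and compare with $\la \rho(c)k_\mu(f)\rho(a)Vh_1,\rho(b)Vh_2\ra = \la k_\mu(f)\rho(a)Vh_1, \rho(c^*b)Vh_2\ra$; both equal $\int f(\phi')\la\phi'((c^*b)^*\!\cdot\! (ca))\cdots\ra$ wait — both equal $\int f(\phi')\la\phi'(b^*ca)h_1,h_2\ra\,\mathrm{d}\mu$, so they agree, giving $k_\mu(f)\rho(c) = \rho(c)k_\mu(f)$. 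Linearity in $f$ is immediate from linearity of the integral, positivity was established above, and contractivity follows from the norm bound (using that $k_\mu$ is unital, $k_\mu(1) = 1_K$, which I should also record). For $\sigma(\mathrm{L}^\infty,\mathrm{L}^1)$–WOT continuity: if $f_\alpha \to f$ weak-$*$, then for fixed $a,b,h_1,h_2$ the function $\phi' \mapsto \la\phi'(b^*a)h_1,h_2\ra$ is bounded and $\mu$-measurable, hence lies in $\mathrm{L}^1(\ucpa,\mu)$, so $\la k_\mu(f_\alpha)\rho(a)Vh_1,\rho(b)Vh_2\ra \to \la k_\mu(f)\rho(a)Vh_1,\rho(b)Vh_2\ra$; since these vectors are dense and the $k_\mu(f_\alpha)$ are uniformly bounded, an $\varepsilon/3$ argument upgrades this to WOT convergence on all of $K$.

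Uniqueness is the easiest part: the defining identity prescribes $\la k_\mu(f)\xi,\eta\ra$ for $\xi,\eta$ in the dense span of $\{\rho(a)Vh\}$, and a bounded operator is determined by its matrix coefficients on a dense subspace. I expect the main obstacle to be the \emph{well-definedness} of $B_f$, since a vector $\sum_i\rho(a_i)Vh_i$ can be the zero vector of $K$ without the formal sum being trivial; the resolution is exactly the estimate $|B_f(\xi,\xi)| \leq \|f\|_\infty B_1(\xi,\xi) = \|f\|_\infty\|\xi\|_K^2$, which forces $B_f(\xi,\cdot) = 0$ whenever $\xi = 0$ in $K$, so this is really the crux where complete positivity of every $\phi'$ (not just positivity) is used. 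Everything else — the commutation relation, the continuity, the norm bounds — is a routine unwinding of the definitions once this foundational estimate is in hand.
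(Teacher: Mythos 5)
Your proof is correct, but it takes a genuinely different route from the paper. The paper leans on Arveson's Radon--Nikodym theorem (Theorem \ref{thm;arnd}): for positive $f$ with $\|f\|_\infty\leq 1$ it forms the completely positive map $\phi_f=\int f(\phi')\phi'\,\mathrm{d}\mu\leq\phi$ and defines $k_\mu(f)$ as the corresponding Radon--Nikodym operator, which then lies in $\acom$ with $0\leq k_\mu(f)\leq 1_K$ \emph{automatically}; the extension to general $f$ by splitting into four positive parts, the contractivity via unitality and positivity on a commutative domain, and the $\sigma(\mathrm{L}^\infty,\mathrm{L}^1)$--WOT continuity argument are then the same as yours. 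You instead build $k_\mu(f)$ by hand as a bounded sesquilinear form on the span of $\{\rho(a)Vh\}$, which amounts to inlining the existence half of Arveson's theorem: you correctly isolate the crux (well-definedness on the quotient, i.e.\ $B_f(\xi,\cdot)=0$ when $\xi=0$ in $K$, forced by $0\leq B_f\leq\|f\|_\infty B_1$ and Cauchy--Schwarz for the positive form, with complete positivity of each $\phi'$ supplying positivity of the Gram-type integrand), and you correctly supply the one step the paper gets for free, namely the verification that $k_\mu(f)$ commutes with $\rho(A)$. Your approach buys self-containedness at the cost of length; the paper's buys brevity by citing the dilation-theoretic machinery it has already set up. Two small cautions: your fallback ``direct estimate $|B_f|\leq\|f\|_\infty B_1$ via Cauchy--Schwarz'' for non-positive $f$ is the weaker of your two options, since well-definedness really needs the decomposition into positive parts (polarization alone does not immediately kill $B_f(\xi,\eta)$ for $\xi=0$ before well-definedness is known); and your $\varepsilon/3$ upgrade of the continuity quietly assumes the net $(k_\mu(f_\alpha))$ is uniformly bounded, which a $\sigma(\mathrm{L}^\infty,\mathrm{L}^1)$-convergent \emph{net} need not guarantee --- though the paper's own proof elides the same point, and the statement can be repaired by noting that $f\mapsto\la k_\mu(f)\xi,\eta\ra$ is a norm limit in $(\mathrm{L}^\infty)^*$ of elements of $\mathrm{L}^1$, hence itself in $\mathrm{L}^1$.
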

\begin{proof}
For a given positive function $f$ in $\text{L}^\infty(\ucpa, \mu)$, define a completely positive map $\phi_f : A \rightarrow B(H)$ as $\phi_f := \int_{\ucpa} f(\phi^\prime) \phi^\prime \, \mathrm{d} \mu$. For $a \in A$ and $h_1, h_2 \in H$ the integral is observed as 
\begin{equation*}
\la \phi_f(a)h_1, h_2 \ra = \int\limits_{\ucpa} f(\phi^\prime) \la \phi^\prime(a)h_1, h_2 \ra \, \mathrm{d} \mu.
\end{equation*}
If $f \geq 0$ and $\| f \|_\infty \leq 1$, then we get
\begin{equation*}
\phi_f = \int\limits_{\ucpa} f(\phi^\prime) \phi^\prime \, \mathrm{d} \mu \leq \int\limits_{\ucpa} \phi^\prime \, \mathrm{d} \mu = \phi.
\end{equation*}
By using Theorem \ref{thm;arnd}, corresponding to $\phi_f$, we get a unique positive operator, say $k_\mu(f) \in \acom$ such that $0 \leq k_\mu(f) \leq 1_K$ and $\phi_f = V^*k_\mu(f) \rho V$. Therefore, for all $a, b \in A$ and $h_1, h_2 \in H$ we have,
\begin{align*}
\la k_\mu(f) \rho(a) V h_1, \rho(b)Vh_2 \ra &=	\la \phi_f(b^*a) h_1, h_2 \ra &\\
&= \int\limits_{\ucpa} f(\phi^\prime) \la \phi^\prime(b^*a)h_1, h_2 \ra \, \mathrm{d} \mu.
\end{align*}
Then we extend the definition of $\phi_f$ for all $f\in \text{L}^\infty(\ucpa, \mu)$. For this, we consider $f \in \text{L}^\infty(\ucpa, \mu)$ as a linear combination of four positive elements in $\text{L}^\infty(\ucpa, \mu)$. Suppose $f = \sum\limits_{i=0}^{3} c_if_i = \sum\limits_{i=0}^{3} d_ig_i$, where $f_i$ and $g_i$ are positive functions and $c_i$ and $d_i$ are scalers, then 
\begin{flalign*}
\la \sum\limits_{i=0}^{3} c_ik_\mu(f_i) \rho(a) V h_1, \rho(b)Vh_2 \ra &= \int\limits_{\ucpa} \sum\limits_{i=0}^{3} c_if_i(\phi^\prime) \la \phi^\prime(b^*a)h_1, h_2 \ra & \\
&= \int\limits_{\ucpa} \sum\limits_{i=0}^{3} d_ig_i(\phi^\prime) \la \phi^\prime(b^*a)h_1, h_2 \ra & \\
&= \la \sum\limits_{i=0}^{3} d_ik_\mu(g_i) \rho(a) V h_1, \rho(b)Vh_2 \ra
\end{flalign*} 
Therefore, $\sum\limits_{i=0}^{3} c_ik_\mu(f_i) = \sum\limits_{i=0}^{3} d_ik_\mu(g_i)$ and we get the existence of $k_\mu(f)$ for all $f \in \text{L}^\infty(\ucpa, \mu)$. The map $k_\mu$ is a unital and positive map from $\text{L}^\infty(\ucpa, \mu)$ into $\acom$. As the domain is commutative, the map $k_\mu$ is completely positive. Therefore, $\|k_\mu\| = \|k_\mu(1)\| = 1$ that implies $k_\mu$ is contractive. If $f_\alpha \rightarrow f$ in $\sigma(\text{L}^\infty, \text{L}^1)$ topology, then this implies for all $a, b \in A$ and $h_1, h_2 \in H$
\begin{equation*}
\int\limits_{\ucpa} f_\alpha(\phi^\prime) \la \phi^\prime(b^*a)h_1, h_2 \ra \, \mathrm{d} \mu \rightarrow \int\limits_{\ucpa} f(\phi^\prime) \la \phi^\prime(b^*a)h_1, h_2 \ra \, \mathrm{d} \mu.
\end{equation*}
That is $\la k_\mu(f_\alpha) \rho(a) V h_1, \rho(b)Vh_2 \ra \rightarrow \la k_\mu(f) \rho(a) V h_1, \rho(b)Vh_2 \ra$. Since $K$ is the closed linear span of $\{ \rho(a)Vh ~|~ a \in A, h\in H \}$, we get $k_\mu(f_\alpha) \rightarrow k_\mu(f)$ in WOT. Therefore, the map $k_\mu : \text{L}^\infty(\ucpa, \mu) \rightarrow \acom$ is continuous in $\sigma(\text{L}^\infty, \text{L}^1) - \text{WOT}$. 
\end{proof}

For fixed $\phi \in \ucpa$, the following proposition characterizes the orthogonal measures $\mu$ belonging to $M_\phi(\ucpa)$ by using the map $k_\mu$ defined in Lemma \ref{lem;kmu}.
\begin{proposition}\label{prop;om}
Let $\mu \in M_\phi(\ucpa)$ and $V^* \rho V$ be the minimal Stinespring dilation of $\phi$. Then the following statements are equivalent
\begin{enumerate}
\item The measure $\mu$ is in $\mathcal{O}_\phi(\ucpa)$.
\item The map $f \mapsto k_\mu(f)$ is a *-isomorphism from $\text{L}^\infty(\ucpa, \mu)$ onto the range of $k_\mu$ in $\acom$.
\item The map $f \mapsto k_\mu(f)$ is a *-homomorphism.
\end{enumerate}
\end{proposition}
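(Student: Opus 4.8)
The plan is to establish the four implications $(2)\Rightarrow(3)$, $(3)\Rightarrow(1)$, $(1)\Rightarrow(3)$ and $(3)\Rightarrow(2)$, which together give $(1)\Leftrightarrow(3)\Leftrightarrow(2)$. The first is immediate ($*$-isomorphism onto its range is a $*$-homomorphism) and the second is short, so all the substance sits in $(1)\Rightarrow(3)$ and in the injectivity of $k_\mu$ needed for $(3)\Rightarrow(2)$.

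For $(3)\Rightarrow(1)$: fix a Borel set $E\subseteq\ucpa$ and put $P:=k_\mu(\chi_E)\in\acom$. Since $\chi_E=\chi_E^*=\chi_E^2$ and $k_\mu$ is a $*$-homomorphism, $P$ is a projection. From the construction inside the proof of Lemma~\ref{lem;kmu}, $\int_E\phi'\,\mathrm d\mu=\phi_{\chi_E}=V^*k_\mu(\chi_E)\rho V=V^*P\rho V$, and since $k_\mu$ is unital and $\chi_{\ucpa\setminus E}=1-\chi_E$, likewise $\int_{\ucpa\setminus E}\phi'\,\mathrm d\mu=V^*(1-P)\rho V$. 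These two completely positive maps sum to $\phi$ because $\mu\in M_\phi(\ucpa)$, so Definition~\ref{def;ocp} gives $\int_E\phi'\,\mathrm d\mu\perp\int_{\ucpa\setminus E}\phi'\,\mathrm d\mu$, i.e. $\mu\in\mathcal{O}_\phi(\ucpa)$.

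For $(1)\Rightarrow(3)$ the key first step is that $k_\mu$ sends indicators to projections. If $\mu$ is orthogonal, then for each Borel $E$ there is a projection $Q\in\acom$, $0\le Q\le 1_K$, with $\int_E\phi'\,\mathrm d\mu=V^*Q\rho V$; but $\int_E\phi'\,\mathrm d\mu=\phi_{\chi_E}$ and, by Theorem~\ref{thm;arnd}, there is a \emph{unique} operator in $\{T\in\acom:0\le T\le 1_K\}$ mapped to $\phi_{\chi_E}$ under $T\mapsto V^*\rho T V$, namely $k_\mu(\chi_E)$; hence $k_\mu(\chi_E)=Q$ is a projection. Next, for disjoint Borel sets $E_1,E_2$ one has $k_\mu(\chi_{E_1})+k_\mu(\chi_{E_2})=k_\mu(\chi_{E_1\cup E_2})$ with all three operators projections, which forces $k_\mu(\chi_{E_1})k_\mu(\chi_{E_2})=0$; writing any two simple functions over a common finite partition into pairwise disjoint Borel sets then gives $k_\mu(st)=k_\mu(s)k_\mu(t)$ and $k_\mu(\overline s)=k_\mu(s)^*$ for simple $s,t$. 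Finally, simple functions are $\|\cdot\|_\infty$-dense in $\text{L}^\infty(\ucpa,\mu)$ and $k_\mu$ is contractive (Lemma~\ref{lem;kmu}), so approximating $f,g$ by simple $s_n\to f$, $t_n\to g$ in norm and using joint norm-continuity of multiplication on norm-bounded sets yields $k_\mu(fg)=k_\mu(f)k_\mu(g)$ and $k_\mu(\overline f)=k_\mu(f)^*$ for all $f,g$; thus $k_\mu$ is a $*$-homomorphism.

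For $(3)\Rightarrow(2)$ it remains to see $k_\mu$ is injective. Setting $a=b=1_A$ in the defining identity of Lemma~\ref{lem;kmu} and using that each $\phi'$ is unital gives $\langle k_\mu(f)Vh_1,Vh_2\rangle=\int_{\ucpa}f(\phi')\langle h_1,h_2\rangle\,\mathrm d\mu$, so $k_\mu(f)=0$ implies $\int_{\ucpa}f\,\mathrm d\mu=0$ (take $h_1=h_2$ a unit vector). Now if $k_\mu(f)=0$ then $k_\mu(|f|^2)=k_\mu(f)^*k_\mu(f)=0$ by $(3)$, hence $\int_{\ucpa}|f|^2\,\mathrm d\mu=0$, so $f=0$ $\mu$-a.e.; thus $k_\mu$ is injective, and an injective $*$-homomorphism of C*-algebras is isometric, so $f\mapsto k_\mu(f)$ is a $*$-isomorphism onto its (closed) range in $\acom$. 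The main obstacle is the very first step of $(1)\Rightarrow(3)$: identifying $k_\mu(\chi_E)$ with the projection provided by orthogonality, which relies on the uniqueness clause in Arveson's Radon--Nikodym theorem; once that is in place, the projection-sum identity on disjoint sets and the norm-density/continuity passage to all of $\text{L}^\infty(\ucpa,\mu)$ are routine.
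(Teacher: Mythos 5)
Your proposal is correct and follows essentially the same route as the paper: the crux in both is identifying $k_\mu(\chi_E)$ with the projection supplied by orthogonality via the uniqueness clause of Arveson's Radon--Nikodym theorem, then passing from indicators to all of $\text{L}^\infty(\ucpa,\mu)$ by norm-density and contractivity, and proving faithfulness by the computation $\la k_\mu(f)Vh,k_\mu(f)Vh\ra=\int|f|^2\,\mathrm d\mu$ using unitality of the $\phi'$. The only differences are cosmetic reorganizations (you run $(1)\Rightarrow(3)\Rightarrow(2)$ where the paper runs $(1)\Rightarrow(2)\Rightarrow(3)\Rightarrow(1)$, and you deduce orthogonality of projections from the projection-sum identity on disjoint sets rather than from the decomposition $f=fg+f(1-g)$).
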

\begin{proof}
Assume $\mu$ is in $\mathcal{O}_\phi(\ucpa)$. By using Lemma \ref{lem;kmu}, we know that the map $f \mapsto k_\mu(f)$ is linear and positive . If $f$ is a projection, then there exists a measurable set, say $E \subseteq \ucpa$ such that $f = \chi_E$, where $\chi_E$ is the characteristic function of $E$. Since $\mu$ is orthogonal, we have 
\begin{equation*}
\int_{E} \phi^\prime \, \mathrm{d} \mu \perp \int_{\ucpa \setminus E} \phi^\prime \, \mathrm{d} \mu
\end{equation*}
Hence, by Definition \ref{def;ocp}, we have $k_\mu(f)$ to be a projection. If $f$ and $g$ are orthogonal projections, then $f \leq 1-g$. Hence $k_\mu(f) \leq 1_K - k_\mu(g)$ and $k_\mu(f)k_\mu(g) = 0$. If $f$ and $g$ are arbitrary projections in $\text{L}^\infty(\ucpa, \mu)$, then each of the pairs $\{ f(1-g), fg\}$, $\{fg, (1-f)g \}$ and $\{f(1-g), (1-f)g \}$ is orthogonal. Thus the decomposition $f = fg + f(1-g)$ and $g = gf + g(1-f)$ implies that $k_\mu(fg) = k_\mu(f)k_\mu(g)$.
	
Now any elements $f, g \in \text{L}^\infty(\ucpa, \mu)$ can be approximated in norm by linear combinations of projections and we have $\|k_\mu(f)\| \leq \|f\|_\infty$. This implies the relation $k_\mu(fg) = k_\mu(f)k_\mu(g)$ holds true for all $f$ and $g$ in $\text{L}^\infty(\ucpa, \mu)$. Therefore, $k_\mu$ is a *-homomorphism. Now we show that $k_\mu$ is faithful. If $f \neq 0$, then we have for $h \in H$ with $\|h\| = 1$ 
\begin{align*}
\la k_\mu(f) V h, k_\mu(f) Vh \ra &= \la k_\mu(\bar{f}f) V h, Vh \ra & \\
&= \int\limits_{\ucpa} \bar{f}f(\phi^\prime) \la \phi^\prime(1_A)h, h \ra \, \mathrm{d} \mu &\\
&= \int\limits_{\ucpa} |f(\phi^\prime)|^2 \la h, h \ra \, \mathrm{d} \mu > 0.
\end{align*}
Hence we get (1) $\implies$ (2). 
	
(2) $\implies$ (3) follows trivially.

Now we assume $f \mapsto k_\mu(f)$ is a *-homomorphism. If $E \subseteq \ucpa$ is a Borel measurable set, then by Lemma \ref{lem;kmu} we get unique elements $k_\mu(\chi_E)$ and $k_\mu(\chi_{\ucpa \setminus E})$ of $\acom$ such that 
\begin{equation*}
\int\limits_{E} \phi^\prime \, \mathrm{d} \mu = V^*k_\mu(\chi_E)\rho V \hspace{7pt} \text{and} \hspace{7pt} \int\limits_{\ucpa \setminus E} \phi^\prime \, \mathrm{d} \mu = V^*k_\mu(\chi_{\ucpa \setminus E})\rho V.
\end{equation*}
Since $k_\mu$ is a *-homomorphism, $k_\mu(\chi_E)$ and $k_\mu(\chi_{\ucpa \setminus E})$ are mutually orthogonal projections of sum 1. Hence, by Definition \ref{def;ocp} we get 
\begin{equation*}
\int\limits_{E} \phi^\prime \, \mathrm{d} \mu \perp \int\limits_{\ucpa \setminus E} \phi^\prime \, \mathrm{d} \mu
\end{equation*}
Therefore, $\mu$ is orthogonal and we have (3) $\implies$ (1). 
\end{proof}

\begin{remark}\label{rem;avna}
If one of the conditions in the previous propositions is satisfied, then $\clB_\mu = \{ k_\mu(f) ~|~ f \in \text{L}^\infty(\ucpa, \mu) \}$ is an abelian von Neumann subalgebra of $\acom$.
\end{remark}

Let $\phi \in \ucpa$ and suppose there exists a probability measure $\mu$ on $\ucpa$ such that $\phi = \int_{\ucpa} \phi^\prime  \, \mathrm{d} \mu$. That is $\mu \in M_\phi(\ucpa)$. Let $V^*\rho V$ and $V_{\phi^\prime}^*\rho_{\phi^\prime} V_{\phi^\prime}$ be the minimal Stinespring dilations of $\phi$ and $\phi^\prime$ respectively, where $V : H \rightarrow K$ and $V_{\phi^\prime} : H \rightarrow K_{\phi^\prime}$ are isometries. We show that the family $\{K_{\phi^\prime}\}_{\ucpa}$ is a measurable family of Hilbert spaces. 

Let $\{a_n\}_{n \geq 1}$ and $\{h_m\}_{m \geq 1}$ be countable dense subsets of $A$ and $H$ respectively. Consider a sequence $(f_{n m})$ of functions such that $f_{n m}(\phi^\prime) := \rho_{\phi^\prime}(a_n)V_{\phi^\prime}h_m$. The function $\phi^\prime \mapsto \la f_{i j}(\phi^\prime), f_{k l}(\phi^\prime) \ra$ is measurable for all $i, j, k, l \geq 1$. Because, we have $\la f_{i j}(\phi^\prime), f_{k l}(\phi^\prime) \ra = \la \phi^\prime(a^*_ka_i)h_j, h_l \ra$ and the function $\phi^\prime \mapsto \la \phi^\prime(a^*_ka_i)h_j, h_l \ra$ is a measurable function. Also from the minimality of $K_{\phi^\prime}$ we have the set $\{f_{n m}(\phi^\prime) ~|~ n, m \geq 1 \}$ is dense in $K_{\phi^\prime}$ for almost every $\phi^\prime$. Hence $\{K_{\phi^\prime}\}_{\ucpa}$ is a measurable family of Hilbert spaces. Now consider a Hilbert space $K_\mu := \di K_{\phi^\prime} \, \mathrm{d} \mu$. For every $f \in \text{L}^\infty(\ucpa, \mu)$, we have a diagonalizable operator, say $D_f$ on $K_\mu$. The map $f \mapsto D_f$ is a *-isomorphism between $\text{L}^\infty(\ucpa, \mu)$ onto the algebra of all diagonalizable operators on $K_\mu$. 

Define a representation $\rho_\mu := \di \rho_{\phi^\prime} \, \mathrm{d} \mu$ of $A$ on $K_\mu$. The representation $\rho_\mu$ is defined as $\rho_\mu(a) := \di \rho_{\phi^\prime}(a) \, \mathrm{d} \mu$ for all $a \in A$. For defining $\rho_\mu$, first we have to check the measurability of $\rho_\mu(a)$ for all $a \in A$. We can observe that $\rho_\mu(a)$ is a measurable operator on $K_\mu$ because, $\la \rho_{\phi^\prime}(a) f_{i j}(\phi^\prime), f_{k l}(\phi^\prime) \ra = \la \phi^\prime(a^*_kaa_i)h_j, h_l \ra$ for all $i, j, k, l \geq 1$ and the function $\phi^\prime \mapsto \la \phi^\prime(a^*_kaa_i)h_j, h_l \ra$ is a measurable function.
 
Now for $h \in H$, consider $h^\mu := \di V_{\phi^\prime}h \, \mathrm{d} \mu$. Since the function $\phi^\prime \mapsto \la f_{n m}(\phi^\prime), V_{\phi^\prime}h \ra = \la \phi^\prime(a_n)h_m, h \ra$ is a measurable function, we get that $h^\mu \in K_\mu$. We have for all $a \in A$ and $h \in H$,
\begin{align*}
\la \rho_\mu(a) h^\mu, h^\mu \ra &= \la \di \rho_{\phi^\prime}(a)V_{\phi^\prime}h \, \mathrm{d} \mu , \di V_{\phi^\prime}h \, \mathrm{d} \mu \ra & \\
&= \int_{\ucpa} \la V^*_{\phi^\prime} \rho_{\phi^\prime}(a) V_{\phi^\prime}h, h \ra \, \mathrm{d} \mu & \\
&= \int_{\ucpa} \la \phi^\prime(a) h, h \ra \, \mathrm{d} \mu & \\
&= \la \phi(a)h, h \ra = \la \rho(a)Vh, Vh \ra.
\end{align*}

Using the fact that the Hilbert space $K$ is the closed linear span of $\{ \rho(a)Vh ~|~ a \in A, ~ h\in H \}$, define a map $U_\mu : K \rightarrow K_\mu$ as $ U_\mu(\rho(a)Vh) := \rho_\mu(a)h^\mu$. First, we see that the map $U_\mu$ is well defined and isometric. For this, observe for $a, b \in A$ and $h_1, h_2 \in H$ we have
\begin{align*}
\la U_\mu(\rho(a)Vh_1), U_\mu(\rho(b)Vh_2) \ra &= \la \rho_\mu(a)h^\mu_{1}, \rho_\mu(b)h^\mu_{2} \ra & \\
&= \int_{\ucpa} \la V_{\phi^\prime}^*\rho_{\phi^\prime}(b^*a)V_{\phi^\prime}h_1, h_2 \ra \, \mathrm{d} \mu &\\
&= \int_{\ucpa} \la \phi^\prime(b^*a)h_1, h_2 \ra \, \mathrm{d} \mu &\\
&= \la \phi(b^*a)h_1, h_2 \ra &\\
&= \la \rho(a)Vh_1, \rho(b)Vh_2 \ra.
\end{align*}
Therefore, we get for all $a, b \in A$ and $h_1, h_2 \in H$
\begin{equation*}
\la U_\mu(\rho(a)Vh_1), U_\mu(\rho(b)Vh_2) \ra = \la \rho(a)Vh_1, \rho(b)Vh_2 \ra.
\end{equation*}

Let $\{a_r\}^n_{r=1}$, $\{a_s\}^m_{s=1}$, $\{h_r\}^n_{r=1}$, $\{h_s\}^m_{s=1}$ be such that $a_r, a_s \in A$ and $h_r, h_s \in H$ for all $r$ and $s$. Then for arbitrary $\sum\limits_{r=1}^{n}\rho(a_r)Vh_r$ and $\sum\limits_{s=1}^{m}\rho(a_s)Vh_s$ in the Hilbert space $K$, we have the following 
\begin{flalign*}
\la \sum_{r=1}^{n} \rho_\mu(a_r)h_r^\mu, \sum_{s=1}^{m} \rho_\mu(a_s)h_s^\mu \ra
&= \sum_{r=1, s=1}^{n, m} \la U_\mu (\rho(a_r)Vh_r), U_\mu (\rho(a_s)Vh_s) \ra & \\
&= \sum_{r=1, s=1}^{n, m}  \la \rho(a_r)Vh_r, \rho(a_s)Vh_s \ra & \\
&= \la \sum_{r=1}^{n}\rho(a_r)Vh_r, \sum_{s=1}^{m}\rho(a_s)Vh_s \ra
\end{flalign*}
This shows that $\la U_\mu \left (\sum_{r=1}^{n}\rho(a_r)Vh_r \right ), U_\mu \left (\sum_{s=1}^{m}\rho(a_s)Vh_s \right ) \ra$ is equal to  $\la \sum_{r=1}^{n}\rho(a_r)Vh_r, \sum_{s=1}^{m}\rho(a_s)Vh_s \ra$ for arbitrary elements $\sum\limits_{r=1}^{n}\rho(a_r)Vh_r$ and $\sum\limits_{s=1}^{m}\rho(a_s)Vh_s$  in the Hilbert space $K$. This proves that the map $U_\mu$ is well defined and isometric on the closed linear span of the elements $\{ \rho(a)Vh ~|~ a \in A, ~ h\in H \}$. That is $U_\mu : K \rightarrow K_\mu$ is an isometry.

The map $U_\mu$ constructed above with respect to the measure $\mu$ in $M_\phi(\ucpa)$ is used in characterizing $\mu \in \mathcal{O}_\phi(\ucpa).$ This has been done in Theorem \ref{thm;unitary}, which states that $\mu \in \mathcal{O}_\phi(\ucpa)$ if and only if the corresponding isometry $U_\mu$ is unitary. To prove Theorem \ref{thm;unitary}, first, we need the following lemma. 

\begin{lemma}\label{lem;dense}
The set
\begin{equation*}
K^\circ_\mu := \{D_f \rho_\mu(a)h^\mu ~|~  f \in \text{L}^\infty(\ucpa, \mu), ~ a \in A, ~ h \in H \}
\end{equation*}
has the dense linear span in $K_\mu$.
\end{lemma}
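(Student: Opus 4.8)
The plan is to argue by orthogonality in the direct integral Hilbert space $K_\mu = \di K_{\phi^\prime} \, \mathrm{d}\mu$. Suppose $\xi = \di \xi(\phi^\prime) \, \mathrm{d}\mu$ is a vector in $K_\mu$ that is orthogonal to every element of $K^\circ_\mu$; I will show $\xi = 0$, which gives that the closed linear span of $K^\circ_\mu$ is all of $K_\mu$. Note first that for $a \in A$, $h \in H$ and $f \in \text{L}^\infty(\ucpa, \mu)$ the vector $D_f \rho_\mu(a) h^\mu$ indeed lies in $K_\mu$, since $\rho_\mu(a)$ is decomposable of norm $\leq \|a\|$ and $D_f$ is a bounded diagonalizable operator, and that in the direct integral picture $D_f \rho_\mu(a) h^\mu = \di f(\phi^\prime)\, \rho_{\phi^\prime}(a) V_{\phi^\prime} h \, \mathrm{d}\mu$.

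First I would fix $a \in A$ and $h \in H$ and let $f$ range over $\text{L}^\infty(\ucpa, \mu)$. Orthogonality of $\xi$ to $D_f \rho_\mu(a) h^\mu$ together with the formula for the inner product on a direct integral gives
\begin{equation*}
0 = \la \xi, D_f \rho_\mu(a) h^\mu \ra = \int_{\ucpa} \overline{f(\phi^\prime)}\, \la \xi(\phi^\prime), \rho_{\phi^\prime}(a) V_{\phi^\prime} h \ra \, \mathrm{d}\mu,
\end{equation*}
where the function $\phi^\prime \mapsto \la \xi(\phi^\prime), \rho_{\phi^\prime}(a) V_{\phi^\prime} h \ra$ is measurable (both $\xi$ and $\phi^\prime \mapsto \rho_{\phi^\prime}(a)V_{\phi^\prime}h$ are measurable vector fields) and lies in $\text{L}^1(\ucpa, \mu)$ by the Cauchy--Schwarz inequality, because $\phi^\prime \mapsto \|\xi(\phi^\prime)\|$ and $\phi^\prime \mapsto \|\rho_{\phi^\prime}(a)V_{\phi^\prime}h\|$ both lie in $\text{L}^2(\ucpa, \mu)$ (as $\xi, \rho_\mu(a)h^\mu \in K_\mu$). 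Letting $f$ run through characteristic functions of Borel subsets of $\ucpa$, this forces $\la \xi(\phi^\prime), \rho_{\phi^\prime}(a) V_{\phi^\prime} h \ra = 0$ for $\mu$-a.e.\ $\phi^\prime$, with the exceptional null set depending on $a$ and $h$.

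Next I would specialize $a = a_n$ and $h = h_m$ over the fixed countable dense subsets $\{a_n\} \subseteq A$ and $\{h_m\} \subseteq H$ used to build the fundamental sequence $f_{nm}(\phi^\prime) = \rho_{\phi^\prime}(a_n)V_{\phi^\prime}h_m$. Taking the intersection of the countably many conull sets so obtained yields a conull set $Y \subseteq \ucpa$ on which $\la \xi(\phi^\prime), f_{nm}(\phi^\prime) \ra = 0$ for all $n, m \geq 1$ simultaneously. Intersecting $Y$ with the conull set on which $\{f_{nm}(\phi^\prime) : n, m \geq 1\}$ is total in $K_{\phi^\prime}$ (established in the discussion preceding the lemma), we conclude $\xi(\phi^\prime) = 0$ for $\mu$-a.e.\ $\phi^\prime$, i.e.\ $\xi = 0$ in $K_\mu$.

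The only point requiring care is the implication ``$\int_{\ucpa} \overline{f}\, g \, \mathrm{d}\mu = 0$ for all $f \in \text{L}^\infty(\ucpa,\mu)$ $\Rightarrow$ $g = 0$ a.e.''; this is standard once one has verified that $g(\phi^\prime) = \la \xi(\phi^\prime), \rho_{\phi^\prime}(a)V_{\phi^\prime}h \ra$ is a bona fide element of $\text{L}^1(\ucpa, \mu)$, which is exactly the measurability-and-integrability check above. The remainder is bookkeeping with countably many null sets. (Alternatively, one could invoke the general fact that simple vector fields assembled from a fundamental sequence are dense in a direct integral Hilbert space, but the orthogonality argument above is self-contained and keeps everything in the language already set up.)
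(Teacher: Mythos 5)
Your proof is correct and follows essentially the same route as the paper's: take $\xi$ orthogonal to $K^\circ_\mu$, vary $f$ over $\text{L}^\infty(\ucpa,\mu)$ to conclude $\la \xi(\phi^\prime), \rho_{\phi^\prime}(a)V_{\phi^\prime}h\ra = 0$ almost everywhere, and then use separability of $A$ and $H$ together with the minimality of each $K_{\phi^\prime}$ to get $\xi(\phi^\prime)=0$ a.e. Your version is in fact slightly more careful than the paper's, since you spell out the $\text{L}^1$ integrability check and the bookkeeping of the countably many null sets, which the paper leaves implicit.
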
 
\begin{proof}
If there exists $\eta \in (K^\circ_\mu)^\perp$, then $\int_{\ucpa} \la f(\phi^\prime) \rho_{\phi^\prime}(a)V_{\phi^\prime}h, \eta_{\phi^\prime} \ra  \, \mathrm{d} \mu$ is 0. That is $\int_{\ucpa} f(\phi^\prime)  \la \rho_{\phi^\prime}(a)V_{\phi^\prime}h, \eta_{\phi^\prime} \ra  \, \mathrm{d} \mu = 0$. Since this is true for all $f \in \text{L}^\infty(\ucpa, \mu)$, we get $\la \rho_{\phi^\prime}(a)V_{\phi^\prime}h, \eta_{\phi^\prime} \ra = 0$ almost everywhere. Here, observe that the Hilbert space $K_{\phi^\prime}$ is the closed linear span of $\{ \rho_{\phi^\prime}(a)V_{\phi^\prime}h ~|~ a \in A, ~ h\in H \}$. Also, $a \in A$, $h \in H$ were arbitrary and the separability of $A$ and $H$ imply that $\eta_{\phi^\prime} = 0$ almost everywhere. Hence $\eta = 0$. Therefore, the set $K^\circ_\mu$ has the dense linear span in $K_\mu$.
\end{proof}

\begin{theorem} \label{thm;unitary}
Let $\mu \in M_\phi(\ucpa)$. Then $\mu$ is in $\mathcal{O}_\phi(\ucpa)$ if and only if the corresponding operator $U_\mu$ is unitary.
\end{theorem}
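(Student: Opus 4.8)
The plan is to route the whole argument through a single operator identity: for every $f \in \text{L}^\infty(\ucpa,\mu)$,
\begin{equation*}
k_\mu(f) = U_\mu^{*} D_f U_\mu,
\end{equation*}
where $k_\mu$ is the map of Lemma \ref{lem;kmu} and $D_f$ is the diagonalizable operator on $K_\mu$ attached to $f$. To prove this I would test both sides on the total set $\{\rho(a)Vh : a \in A,\ h \in H\}$. Using that $D_f$ is the multiplication operator by $f$, the definitions of $h^\mu$ and of $U_\mu$, and decomposability of $\rho_\mu(a)$, one computes
\begin{equation*}
\la D_f U_\mu(\rho(a)Vh_1),\, U_\mu(\rho(b)Vh_2)\ra = \int_{\ucpa} f(\phi^\prime)\,\la \phi^\prime(b^{*}a)h_1,h_2\ra\,\mathrm{d}\mu = \la k_\mu(f)\rho(a)Vh_1,\, \rho(b)Vh_2\ra,
\end{equation*}
the last equality by Lemma \ref{lem;kmu}. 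Since $U_\mu$ is an isometry, this yields $\la U_\mu^{*}D_f U_\mu\,\xi,\eta\ra = \la k_\mu(f)\xi,\eta\ra$ first for $\xi,\eta$ in the linear span of the total set and then, by continuity, for all $\xi,\eta \in K$; the passage from bounded simple $f$ to all of $\text{L}^\infty(\ucpa,\mu)$ uses the $\sigma(\text{L}^\infty,\text{L}^1)$--WOT continuity of $k_\mu$ from Lemma \ref{lem;kmu} together with the analogous continuity of $f \mapsto D_f$.

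Granting the identity, the ``if'' direction is immediate: if $U_\mu$ is unitary, then $k_\mu(\cdot) = U_\mu^{*}(\cdot)U_\mu$ is the conjugate by a unitary of the $*$-isomorphism $f \mapsto D_f$ onto the diagonalizable operators of $K_\mu$, hence a $*$-homomorphism, and Proposition \ref{prop;om} gives $\mu \in \mathcal{O}_\phi(\ucpa)$.

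For the ``only if'' direction, assume $\mu \in \mathcal{O}_\phi(\ucpa)$, so that $k_\mu$ is a $*$-homomorphism by Proposition \ref{prop;om}. Let $P = U_\mu U_\mu^{*}$ be the orthogonal projection onto the closed subspace $U_\mu(K)$. For any $f \in \text{L}^\infty(\ucpa,\mu)$, $a \in A$, $h \in H$, the identity (together with $D_f^{*}D_f = D_{\bar f f}$) gives
\begin{equation*}
\|D_f U_\mu(\rho(a)Vh)\|^{2} = \la k_\mu(\bar f f)\rho(a)Vh,\, \rho(a)Vh\ra,
\end{equation*}
while, since $U_\mu$ is isometric and $P D_f U_\mu = U_\mu U_\mu^{*} D_f U_\mu = U_\mu k_\mu(f)$,
\begin{equation*}
\|P D_f U_\mu(\rho(a)Vh)\|^{2} = \|k_\mu(f)\rho(a)Vh\|^{2} = \la k_\mu(\bar f)k_\mu(f)\rho(a)Vh,\, \rho(a)Vh\ra.
\end{equation*}
Multiplicativity of $k_\mu$ makes the two right-hand sides coincide, so $D_f U_\mu(\rho(a)Vh) = D_f \rho_\mu(a)h^\mu$ lies in $U_\mu(K)$. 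By Lemma \ref{lem;dense} such vectors have dense linear span in $K_\mu$, and $U_\mu(K)$ is closed, so $U_\mu$ is surjective; being an isometry, it is unitary.

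The one genuinely delicate point is establishing $k_\mu(f) = U_\mu^{*}D_f U_\mu$ itself: measurability of all the ingredients has already been arranged in the construction preceding the statement, but one still has to carry out the inner-product computation for the multiplication operator $D_f$ against the vectors $U_\mu(\rho(a)Vh) = \rho_\mu(a)h^\mu$, and then extend it by linearity and density in $K$ and by the stated weak continuities in $f$. Once that identity is in hand, the theorem reduces to the short Hilbert-space argument above together with Proposition \ref{prop;om} and Lemma \ref{lem;dense}.
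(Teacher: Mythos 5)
Your proof is correct and follows essentially the same route as the paper: both hinge on the operator identity $U_\mu^{*}D_f U_\mu = k_\mu(f)$ (valid for any isometry $U_\mu$), Proposition \ref{prop;om}, and Lemma \ref{lem;dense}. The only difference is cosmetic: for surjectivity the paper explicitly approximates $D_f\rho_\mu(a)h^\mu$ by elements $\sum_{i}\rho_\mu(a_i)h_i^\mu$ of $U_\mu K$ and expands the squared norm of the difference, whereas you package the identical computation as the norm equality $\|D_f U_\mu\xi\| = \|U_\mu U_\mu^{*}D_f U_\mu\xi\|$ forcing $D_f U_\mu\xi \in U_\mu K$; both invoke the multiplicativity of $k_\mu$ at exactly the same point.
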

\begin{proof}
For a given $\phi \in \ucpa$ and $\mu \in M_\phi(\ucpa)$, we construct the operator $U_\mu$ as discussed above. Moreover, we know that $U_\mu$ is an isometry. We assume that $U_\mu$ is unitary and suppose $V^* \rho V$ be the minimal Stinespring dilation of $\phi$. Then for $f \in \text{L}^\infty(\ucpa, \mu)$, $a, b \in A$ and $h_1, h_2 \in H$ we have,
\begin{align*}
\la U_\mu^*D_f U_\mu (\rho(a)Vh_1), \rho(b)Vh_2 \ra &= \la D_f\rho_\mu(a)h^\mu_{1}, \rho_\mu(b)h^\mu_{2} \ra &\\
&= \int\limits_{\ucpa} f(\phi^\prime) \la \rho_{\phi^\prime}(b^*a)V_{\phi^\prime}h_1, V_{\phi^\prime}h_2 \ra &\\
&= \int\limits_{\ucpa} f(\phi^\prime) \la \phi^\prime(b^*a)h_1, h_2 \ra  \mathrm{d} \mu &\\
&= \la k_\mu(f) \rho(b^*a)Vh_{1}, Vh_{2} \ra  &\\
&= \la k_\mu(f) \rho(a)Vh_{1}, \rho(b)Vh_{2} \ra	
\end{align*}
Since, the Hilbert space $K$ is the closed linear span of the elements of the form $\{ \rho(a)Vh ~|~ a \in A, ~ h\in H \}$, we get $U_\mu^*D_f U_\mu = k_\mu(f)$. As the map $f \mapsto D_f$ is a *-isomorphism and $U_\mu$ is unitary imply that the map $f \mapsto k_\mu(f)$ is a *-isomorphism from  $\text{L}^\infty(\ucpa, \mu)$ into $\acom$. Then by using Proposition~\ref{prop;om}, we get that the measure $\mu$ is an orthogonal measure with barycenter $\phi$, that is, $\mu \in \mathcal{O}_\phi(\ucpa)$. 

Converely, if $\mu \in \mathcal{O}_\phi(\ucpa)$, then we show that the corresponding isometry $U_\mu$ is an unitary operator. Since, $U_\mu : K \rightarrow K_\mu$ is an isometry to prove the converse, we need to show that the map $U_\mu$ is surjective. We prove, for given $\epsilon > 0$, $f \in \text{L}^\infty(\ucpa, \mu)$, $a \in A$ and $h \in H$, the existence of $\{a_i\}_{i = 1}^n$ and $\{h_i\}_{i = 1}^n$, where $a_i \in A$ and $h_i \in H$ such that $\|D_f \rho_\mu(a)h^\mu - \sum_{i=1}^{n} \rho_\mu(a_i)h^\mu_i \| < \epsilon$. For simplicity of the computations we denote the term $\|D_f \rho_\mu(a)h^\mu - \sum_{i=1}^{n} \rho_\mu(a_i)h^\mu_i \|$ by $N$. Then we have 
\begin{flalign*}
N^2 = & \la D_{\bar{f}f} \rho_\mu(a^*a)h^\mu, h^\mu \ra - \sum_{i =1}^{n} \la D_f \rho_\mu(a^*_{i}a)h^\mu, h_i^\mu \ra & \\
& - \sum_{i=1}^{n} \la D_{\bar{f}}\rho_\mu(a^*a_i)h_i^\mu, h^\mu \ra + \sum_{i, j=1}^{n} \la \rho_\mu(a^*_{j}a_i)h_i^\mu,  h_j^\mu \ra & \\
= &\int\limits_{\ucpa} |f(\phi^\prime)|^2 \la \rho_{\phi^\prime}(a^*a)V_{\phi^\prime}h, V_{\phi^\prime}h \ra \mathrm{d} \mu & \\
& -\sum_{i=1}^{n} \int\limits_{\ucpa} f(\phi^\prime) \la \rho_{\phi^\prime}(a^*_{i}a)V_{\phi^\prime}h, V_{\phi^\prime}h_i \ra \mathrm{d} \mu & \\
& - \sum_{i=1}^{n} \int\limits_{\ucpa} \bar{f}(\phi^\prime) \la \rho_{\phi^\prime}(a^*a_i)V_{\phi^\prime}h_i, V_{\phi^\prime}h \ra \mathrm{d} \mu
\end{flalign*}

\begin{flalign*}
& + \sum_{i,j=1}^{n} \int\limits_{\ucpa} \la \rho_{\phi^\prime}(a^*_{j}a_i)V_{\phi^\prime}h_i, V_{\phi^\prime}h_j \ra &\\
=&  \int\limits_{\ucpa} |f(\phi^\prime)|^2 \la \phi^\prime(a^*a)h, h \ra  \mathrm{d} \mu - \sum_{i=1}^{n} \int\limits_{\ucpa} f(\phi^\prime) \la \phi^\prime(a^*_{i}a)h, h_i \ra \mathrm{d} \mu  &\\
& - \sum_{i=1}^{n} \int\limits_{\ucpa} \bar{f}(\phi^\prime) \la \phi^\prime(a^*a_i)h_i, h \ra \mathrm{d} \mu + \sum_{i,j=1}^{n} \int\limits_{\ucpa} \la \phi^\prime(a^*_{j}a_i)h_i, h_j \ra \mathrm{d} \mu &\\
= & \la k_\mu(\bar{f}f) \rho(a^*a) Vh, Vh \ra - \sum_{i=1}^{n} \la k_\mu(f) \rho(a^*_{i}a) Vh, Vh_i \ra &\\
& - \sum_{i=1}^{n} \la k_\mu(\bar{f}) \rho(a^*a_i) Vh_i, Vh \ra + \sum_{i,j=1}^{n} \la \rho(a^*_{j}a_i) Vh_i, Vh_j \ra & \\
= & \la k_\mu(f) \rho(a) Vh, k_\mu(f)\rho(a)Vh \ra - \sum_{i=1}^{n} \la k_\mu(f) \rho(a) Vh, \rho(a_i)Vh_i \ra &\\ 
& - \sum_{i=1}^{n} \la \rho(a_i) Vh_i, k_\mu(f) \rho(a) Vh \ra + \sum_{i,j=1}^{n} \la \rho(a_i) Vh_i, \rho(a_j)Vh_j \ra &\\
=& \left \|k_\mu(f) \rho(a) Vh - \sum_{i=1}^{n} \rho(a_i) Vh_i \right \|^2.	
\end{flalign*}
The second last step follows because the measure $\mu$ is orthogonal. 

The minimality of $K$ implies that for given $\epsilon > 0$, a measurable function $f \in \text{L}^\infty(\ucpa, \mu)$, $a \in A$ and $h \in H$, there exists $\{a_i\}_{i = 1}^n$ and $\{h_i\}_{i = 1}^n$ where $a_i \in A$ and $h_i \in H$ such that 
\begin{equation*}
\left \|k_\mu(f) \rho(a) Vh - \sum_{i=1}^{n} \rho(a_i) Vh_i \right \|^2 < \epsilon^2.
\end{equation*} 
Therefore, 
\begin{equation*}
\left \|D_f \rho_\mu(a)h^\mu - \sum_{i=1}^{n} \rho_\mu(a_i) h_i^\mu \right \|^2 < \epsilon^2.
\end{equation*}

The element $D_f \rho_\mu(a)h^\mu \in K^\circ_\mu$ and the element $\sum_{i=1}^{n} \rho_\mu(a_i)h_i^\mu \in U_\mu K$. Then by using Lemma \ref{lem;dense}, we know that $K^\circ_\mu$ has the dense linear span in $K_\mu$. This implies $U_\mu K$ is dense in $K_\mu$. Hence, $U_\mu$ is unitary.
\end{proof}

Let $\mu$ be an element of $M_\phi(\ucpa)$, where $\phi \in \ucpa$. Suppose $V^* \rho V$ is the minimal Stinespring dilation of $\phi$, where $V : H \rightarrow K$ and $\rho : A \rightarrow B(K)$. Let $\mu$ be an orthogonal measure, then from Remark \ref{rem;avna}, we get the existence of an abelian subalgebra say $\clB_\mu$ of $\acom$. The following corollary identifies $K$ and $\rho$ with $K_\mu$ and $\rho_\mu$ respectively as the decomposition with respect to the abelian subalgebra $\clB_\mu$.

\begin{corollary}\label{cor;dint}
With the notations as in Theorem \ref{thm;unitary} we get, if the measure $\mu$ is orthogonal, then with respect to the abelian subalgebra $\clB_\mu = \{ k_\mu(f) ~|~ f \in \text{L}^\infty(\ucpa, \mu) \}$ of $\acom$ the Hilbert space $K$ and the representation $\rho$ disintegrate as $K_\mu = \di K_{\phi^\prime} \, \mathrm{d} \mu$ and $\rho_\mu = \di \rho_{\phi^\prime} \, \mathrm{d} \mu$ respectively. Moreover, the algebra of all diagonalizable operators on $K_\mu$ is given by $U_\mu \clB_\mu U_\mu^*$.
\end{corollary}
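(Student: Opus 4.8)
The plan is to recognise the unitary $U_\mu$ produced in Theorem~\ref{thm;unitary} as a concrete realisation of the spatial decomposition furnished by Theorem~\ref{thm;dint} for the representation $\rho$ relative to the abelian algebra $\clB_\mu$. First I would check that all hypotheses of Theorem~\ref{thm;dint} are in force: the minimal Stinespring representation $\rho$ is a unital (hence non-degenerate) representation of the separable C*-algebra $A$ on the separable Hilbert space $K$, and by Remark~\ref{rem;avna} the orthogonality of $\mu$ makes $\clB_\mu=\{k_\mu(f)\mid f\in\text{L}^\infty(\ucpa,\mu)\}$ an abelian von Neumann subalgebra of $\acom$. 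I would also recall from the discussion preceding Lemma~\ref{lem;dense} that $\{K_{\phi^\prime}\}_{\ucpa}$ is a measurable family of Hilbert spaces, that $\{\rho_{\phi^\prime}\}_{\ucpa}$ is a measurable family of representations, and that $f\mapsto D_f$ is a $*$-isomorphism of $\text{L}^\infty(\ucpa,\mu)$ onto the algebra of diagonalizable operators on $K_\mu=\di K_{\phi^\prime}\,\mathrm{d}\mu$.

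Next I would verify the two intertwining identities that pin $U_\mu$ down as the unitary of Theorem~\ref{thm;dint}. For the diagonalizable operators: the computation inside the proof of Theorem~\ref{thm;unitary} already gives $U_\mu^* D_f U_\mu=k_\mu(f)$ for every $f\in\text{L}^\infty(\ucpa,\mu)$, and since $U_\mu$ is unitary (this is exactly where orthogonality of $\mu$ enters), this rearranges to $U_\mu k_\mu(f)U_\mu^*=D_f$, so $U_\mu\clB_\mu U_\mu^*$ is precisely the algebra of all diagonalizable operators on $K_\mu$. For the representation: on the total set $\{\rho(b)Vh\mid b\in A,\ h\in H\}$ of $K$, using $U_\mu(\rho(b)Vh)=\rho_\mu(b)h^\mu$ and the multiplicativity of $\rho_\mu$,
\[
U_\mu\rho(a)\bigl(\rho(b)Vh\bigr)=U_\mu\bigl(\rho(ab)Vh\bigr)=\rho_\mu(ab)h^\mu=\rho_\mu(a)\rho_\mu(b)h^\mu=\rho_\mu(a)U_\mu\bigl(\rho(b)Vh\bigr),
\]
so $U_\mu\rho(a)U_\mu^*=\rho_\mu(a)=\di\rho_{\phi^\prime}(a)\,\mathrm{d}\mu$ for all $a\in A$.

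Finally I would conclude that the data $\bigl(\ucpa,\mu,\{K_{\phi^\prime}\},\{\rho_{\phi^\prime}\},U_\mu\bigr)$ satisfies verbatim the conclusion of Theorem~\ref{thm;dint} for the pair $(\rho,\clB_\mu)$; hence this is the direct integral decomposition of $\rho$ with respect to $\clB_\mu$, which yields $K\cong K_\mu=\di K_{\phi^\prime}\,\mathrm{d}\mu$ and $\rho\cong\rho_\mu=\di\rho_{\phi^\prime}\,\mathrm{d}\mu$ with $U_\mu\clB_\mu U_\mu^*$ the diagonalizable operators, as claimed. I do not expect a genuine obstacle here: the argument is an assembly of Theorem~\ref{thm;unitary}, Remark~\ref{rem;avna} and the measurability facts established earlier, and the only point needing a moment's care is the bookkeeping that $U_\mu$ carries $\rho$ to $\rho_\mu$ and $\clB_\mu$ onto the diagonalizable operators --- both immediate from the defining formula for $U_\mu$ and from the identity $U_\mu^* D_f U_\mu=k_\mu(f)$ already obtained. (Alternatively one could invoke the essential uniqueness of the decomposition in Theorem~\ref{thm;dint}, but the direct check makes this unnecessary.)
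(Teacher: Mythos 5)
Your proposal is correct and follows essentially the same route as the paper: it uses the unitarity of $U_\mu$ from Theorem~\ref{thm;unitary}, the identity $U_\mu^* D_f U_\mu = k_\mu(f)$ established there to identify $U_\mu \clB_\mu U_\mu^*$ with the diagonalizable operators, and the intertwining computation $U_\mu\rho(a)U_\mu^*=\rho_\mu(a)$ on the total set $\{\rho(b)Vh\}$. The only cosmetic difference is that you make explicit the appeal to the hypotheses and conclusion of Theorem~\ref{thm;dint}, which the paper leaves implicit.
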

\begin{proof}
By using Theorem \ref{thm;unitary}, we get for $\phi \in \ucpa$ the measure $\mu$ belongs to $\mathcal{O}_\phi(\ucpa)$ if and only if the corresponding operator $U_\mu : K \rightarrow K_\mu$ is unitary. Suppose $\mu$ is orthogonal, then we have for all $a_1, a_2 \in A$ and $h \in H$, 
\begin{equation*} 
U_\mu^* \rho_\mu(a_1) U_\mu (\rho(a_2)Vh) = U_\mu^* \rho_\mu(a_1) \rho_\mu(a_2)h^\mu = \rho(a_1)(\rho(a_2) V h).
\end{equation*}
Then using the minimality of $K$, we get $U_\mu^* \rho_\mu(a) U_\mu = \rho(a)$ for all $a \in A$. This implies that the Hilbert spaces $K$ and $K_\mu$ are isomorphic and the representations $\rho$ and $\rho_\mu$ are unitarily equivalent. 

Now recall from the first part of the proof of Theorem \ref{thm;unitary}, that for $f \in \text{L}^\infty(\ucpa, \mu)$ we have $U_\mu^*D_f U_\mu = k_\mu(f)$. Since $\mu$ is orthogonal, $U_\mu$ is unitary which implies $U_\mu k_\mu(f) U_\mu^* = D_f$. This shows that the algebra of all diagonalizable operators on $K_\mu$ is given by $U_\mu \clB_\mu U_\mu^*$
\end{proof}

Now we define a special class of abelian subalgebras called orthogonal abelian subalgebras.
\begin{definition}\label{def;oasa}
Let $\phi \in \ucpa$ such that $V^* \rho V$ be the minimal Stinespring dilation of $\phi$ with $V : H \rightarrow K$. Let $\clB$ be an abelian subalgebra of $\acom$. Suppose $K$ and $\rho$ disintegrate as $\diX K_p \, \mathrm{d} \nu$ and $\diX \rho_p \, \mathrm{d} \nu$ respectively with respect to the abelian subalgebra $\clB$. Then we say $\clB$ is an orthogonal abelian subalgebra of $\acom$ if;
\begin{enumerate}
\item the operator $V : H \rightarrow K = \diX K_p \, \mathrm{d} \nu$ can be written as $V = \diX V_p \, \mathrm{d} \nu$ where $V_p : H \rightarrow K_p$ is an isometry for almost every $p \in X$;
\item the abelian algebra $\text{L}^\infty(\ucpa, \mu_\clB)$ is isomorphic to an algebra $\text{L}^\infty(X, \nu)$, where $\mu_\clB$ is the pushforward measure defined on $\ucpa$ using the measurable map say $g : X \setminus X_0 \rightarrow \ucpa$ defined as $p \mapsto V_p^* \rho_p V_p$, where $X_0$ is the $\nu$ measure zero set consisting of those $p$ such that $V_p$ is not isometry or $\rho_p$ is not unital.
\end{enumerate} 
\end{definition}

The following remark shows that the Definition \ref{def;oasa} does not depend on the  $\nu$ measure zero set which is being removed from $X.$
\begin{remark}\label{rem;mzs}
The $\nu$ measure zero set $X_0$ in the above definition need not be unique. Suppose $\tilde{X_0}$ is another $\nu$ measure zero set and $\tilde{g} : X \setminus \tilde{X_0} \rightarrow \ucpa$ be the corresponding function with the pushforward measure as $\tilde{\mu_\clB}$. Then for any Borel measurable set say $E$ of $\ucpa$ we have $\mu_\clB(E) = \nu (g^{-1}(E))$ and $\tilde{\mu_\clB}(E) = \nu (\tilde{g}^{-1}(E))$. Since $g = \tilde{g}$ on $X \setminus X_o \cup \tilde{X_0}$, the measure of the set $g^{-1}(E) \setminus \tilde{g}^{-1}(E) \cup \tilde{g}^{-1}(E) \setminus g^{-1}(E)$ is zero. Hence $\mu_\clB(E) = \tilde{\mu_\clB}(E)$ for all Borel measurable subset $E$ of $\ucpa$. This implies $\text{L}^\infty(\ucpa, \mu_\clB)$ is isomorphic to $\text{L}^\infty(\ucpa, \tilde{\mu_\clB})$.
\end{remark}

Now, we prove the main theorem of this section which characterizes orthogonal measures with orthogonal abelian subalgebras.
\begin{theorem} \label{thm;omoas}
Let $\phi \in \ucpa$ and $V^* \rho V$ be the minimal Stinespring dilation of $\phi$. Then there is a one-one correspondence between the following sets:
\begin{enumerate}
\item The orthogonal measures $\mu$ with $\phi$ as its barycenter, that is $\mu \in  \mathcal{O}_\phi(\ucpa)$.
\item The orthogonal abelian subalgebras, $\clB \subseteq \acom$.
\end{enumerate} 
\end{theorem}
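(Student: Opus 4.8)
The plan is to write the two maps explicitly and verify that they are mutually inverse. To an orthogonal measure $\mu \in \mathcal{O}_\phi(\ucpa)$, Lemma~\ref{lem;kmu}, Proposition~\ref{prop;om} and Remark~\ref{rem;avna} associate the abelian von Neumann algebra $\clB_\mu := \{k_\mu(f) \mid f \in \text{L}^\infty(\ucpa, \mu)\} \subseteq \acom$; call this assignment $\Phi$. To an orthogonal abelian subalgebra $\clB \subseteq \acom$, Definition~\ref{def;oasa}(2) associates the pushforward measure $\mu_\clB$ on $\ucpa$; call this assignment $\Psi$. By Remark~\ref{rem;mzs}, $\mu_\clB$ does not depend on the $\nu$-null set discarded, and since the spatial disintegration of a representation with respect to a fixed abelian subalgebra is unique up to a $\nu$-class-preserving Borel isomorphism of the base intertwining the fibres, it does not depend on the disintegration furnished by Theorem~\ref{thm;dint} either, so $\Psi$ is well defined. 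The goal is then $\Psi \circ \Phi = \mathrm{id}$ and $\Phi \circ \Psi = \mathrm{id}$.

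First I would check that $\Phi$ takes values among the orthogonal abelian subalgebras and that $\Psi \circ \Phi = \mathrm{id}$. By Corollary~\ref{cor;dint}, disintegrating $(\rho, K)$ with respect to $\clB_\mu$ realizes $K \cong K_\mu = \di K_{\phi'}\,\mathrm d\mu$, $\rho \cong \rho_\mu = \di \rho_{\phi'}\,\mathrm d\mu$ over the base $(\ucpa, \mu)$, with the diagonalizable operators equal to $U_\mu \clB_\mu U_\mu^*$. Since $U_\mu V h = h^\mu = \di V_{\phi'}h\,\mathrm d\mu$, the isometry $V$ decomposes as $\di V_{\phi'}\,\mathrm d\mu$; here each $V_{\phi'}$ is an isometry and each $\rho_{\phi'}$ is unital because $(\rho_{\phi'}, V_{\phi'}, K_{\phi'})$ is the minimal Stinespring dilation of the unital map $\phi'$, which is condition~(1) of Definition~\ref{def;oasa}. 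The associated map $g\colon \phi' \mapsto V_{\phi'}^*\rho_{\phi'}V_{\phi'}$ is, by the Stinespring identity, the identity almost everywhere, hence $\mu_{\clB_\mu} = g_*\mu = \mu$; this is simultaneously condition~(2) of Definition~\ref{def;oasa} and the equality $\Psi(\Phi(\mu)) = \mu$. Thus $\clB_\mu$ is an orthogonal abelian subalgebra and $\Psi \circ \Phi = \mathrm{id}$.

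Next I would show that $\Psi$ takes values in $\mathcal{O}_\phi(\ucpa)$ and that $\Phi \circ \Psi = \mathrm{id}$. Fix a disintegration $K \cong \diX K_p\,\mathrm d\nu$, $\rho \cong \diX \rho_p\,\mathrm d\nu$, $V = \diX V_p\,\mathrm d\nu$ with $\clB$ the diagonalizable operators, and let $g\colon p \mapsto V_p^*\rho_p V_p$, $\mu_\clB = g_*\nu$. A change of variables gives $\la \phi(a)h_1, h_2\ra = \int_X \la V_p^*\rho_p(a)V_p h_1, h_2 \ra\,\mathrm d\nu = \int_{\ucpa} \la \phi'(a)h_1, h_2\ra\,\mathrm d\mu_\clB$ for all $a \in A$ and $h_1, h_2 \in H$, so $\mu_\clB \in M_\phi(\ucpa)$ and Lemma~\ref{lem;kmu} applies. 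Now consider the $*$-homomorphism $j\colon \text{L}^\infty(\ucpa, \mu_\clB) \to \clB \subseteq \acom$ sending $F$ to the conjugate (by the disintegration unitary) of the multiplication operator by $F \circ g$; essential injectivity of $g$, which is exactly condition~(2) of Definition~\ref{def;oasa}, makes $j$ a $*$-isomorphism onto $\clB$. Computing with decomposable and diagonalizable operators and the same change of variables, one gets $\la j(F)\rho(a)Vh_1, \rho(b)Vh_2 \ra = \int_{\ucpa} F(\phi')\la \phi'(b^*a)h_1, h_2\ra\,\mathrm d\mu_\clB$, which is precisely the identity characterizing $k_{\mu_\clB}(F)$ in Lemma~\ref{lem;kmu}; therefore $k_{\mu_\clB} = j$. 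In particular $k_{\mu_\clB}$ is a $*$-homomorphism, so $\mu_\clB \in \mathcal{O}_\phi(\ucpa)$ by Proposition~\ref{prop;om}, and $\clB_{\mu_\clB} = \mathrm{range}(k_{\mu_\clB}) = \mathrm{range}(j) = \clB$, that is $\Phi \circ \Psi = \mathrm{id}$. Combining the two composites gives the desired bijection.

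The step I expect to be the main obstacle is the identification $k_{\mu_\clB} = j$. It requires care with the measurability of the fields entering $g$ and with the change-of-variables formula for the pushforward along a Borel map into $\ucpa$ equipped with the BW topology, and it requires locating precisely where condition~(2) of Definition~\ref{def;oasa} is used: orthogonality of $\mu_\clB$ only needs $k_{\mu_\clB}$ to be a $*$-homomorphism (Proposition~\ref{prop;om}), whereas condition~(2) is what guarantees that the range of $k_{\mu_\clB}$ recovers all of $\clB$ rather than a proper subalgebra, which is what makes $\Phi$ and $\Psi$ genuinely inverse. A secondary point, handled by Remark~\ref{rem;mzs} together with the uniqueness of the spatial disintegration, is that $\mu_\clB$ is a well-defined Borel probability measure depending only on $\clB$.
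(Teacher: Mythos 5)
Your proposal is correct and follows essentially the same route as the paper: the same assignments $\mu \mapsto \clB_\mu$ (via Lemma \ref{lem;kmu} and Corollary \ref{cor;dint}) and $\clB \mapsto \mu_\clB$ (pushforward along $p \mapsto V_p^*\rho_p V_p$), the same verification that $g$ is the identity a.e.\ in one direction, and the same identification of $k_{\mu_\clB}(f)$ with $f \circ g$ in the other. The only cosmetic difference is that you deduce orthogonality of $\mu_\clB$ from the $*$-homomorphism property via Proposition \ref{prop;om}, whereas the paper checks Definition \ref{def;om} directly by exhibiting the projections $\chi_{g^{-1}(E)}$; your placement of where condition (2) of Definition \ref{def;oasa} is needed (surjectivity of $k_{\mu_\clB}$ onto $\clB$, not orthogonality itself) matches the paper's Remark \ref{rem;soasa}.
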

\begin{proof}
Let $\mu \in \mathcal{O}_\phi(\ucpa)$. Then from Remark \ref{rem;avna} we get an abelian subalgebra $\clB_\mu$ of $\acom$. We prove that the abelian subalgebra $\clB_\mu$ is orthogonal. We have $\phi = \int_{\ucpa} \phi^\prime \, \mathrm{d} \mu$, where $V^*_{\phi^\prime} \rho_{\phi^\prime} V_{\phi^\prime}$ is the minimal Stinespring dilation of $\phi^\prime$ and $V_{\phi^\prime} : H \rightarrow K_{\phi^\prime}$. By using Corollary \ref{cor;dint} we know $K$ and $\rho$ disintegrate as $\di K_{\phi^\prime} \, \mathrm{d} \mu$ and $\di \rho_{\phi^\prime} \, \mathrm{d} \mu$ respectively with respect to the abelian subalgebra $\clB_\mu$. Now to show that $\clB_\mu$ is orthogonal we prove the first condition in Definition \ref{def;oasa}. We have for all $h \in H$
\begin{equation*}
Vh = \rho(1_A)Vh = \di \rho_{\phi^\prime}(1_A)V_{\phi^\prime}h \, \mathrm{d} \mu = \di V_{\phi^\prime}h \, \mathrm{d} \mu
\end{equation*}
where the second equality follows from the identification of $\rho(1_A)Vh$ with $\rho_\mu(1_A)h^\mu$ using the unitary $U_\mu$. Since each $\phi^\prime$ is unital, $V_{\phi^\prime}$ is an isometry for almost every ${\phi^\prime}$. This proves the first condition in Definition \ref{def;oasa}. For the second condition, observe that the map $g : \ucpa \setminus X_0 \rightarrow \ucpa$ given by $\phi^\prime \mapsto V_{\phi^\prime}^* \rho_{\phi^\prime} V_{\phi^\prime}$ is equal to the identity map on $\ucpa$ in almost everywhere sense. Therefore, the pushforward measure $\mu_{\clB_\mu}$ obtained using $g$ is same as $\mu$. This implies $\text{L}^\infty(\ucpa, \mu_{\clB_\mu})$ is isomorphic to  $\text{L}^\infty(\ucpa, \mu)$. Hence $\clB_\mu$ is an orthogonal abelian subalgebra of $\acom$.

Conversely, suppose $\clB$ is an orthogonal abelian subalgebra of $\acom$. Let $K$ and $\rho$ disintegrates as $\diX K_p \, \mathrm{d} \nu$ and $\diX \rho_p \, \mathrm{d} \nu$ respectively with respect to $\clB$. Since $\clB$ is an orthogonal abelian subalgebra, the operator $V : H \rightarrow K = \diX K_p \, \mathrm{d} \nu$ can be written as $V = \diX V_p \, \mathrm{d} \nu$, where $V_p : H \rightarrow K_p$ is an isometry for almost every $p$. Also, the abelian algebra $\text{L}^\infty(\ucpa, \mu_\clB)$ is isomorphic to $\text{L}^\infty(X, \nu)$, where $\mu_\clB$ is the pushforward measure defined on $\ucpa$ using the measurable map, say $g : X \rightarrow \ucpa$ given by $p \mapsto V_p^* \rho_p V_p$ almost everywhere. Then for all $a \in A$ and $h_1, h_2 \in H$ we get,
\begin{align*}
\la \phi(a)h_1, h_2 \ra &= \la \rho(a)Vh_1, Vh_2 \ra &\\
&= \la \diX \rho_p(a)V_ph_1 \, \mathrm{d} \nu, \diX V_ph_2 \, \mathrm{d} \nu \ra &\\
&= \int_{X} \la V_p^*\rho_p(a)V_ph_1, h_2 \ra \, \mathrm{d} \nu &\\
&= \int_{\ucpa} \la \phi^\prime(a)h_1, h_2 \ra \, \mathrm{d} \mu_\clB.
\end{align*}
The last equality follows because the integral is defined with respect to the pushforward  measure $\mu_\clB$. Since this is true for all $a \in A$ and $h_1, h_2 \in H$, we have $\mu_\clB \in M_\phi(\ucpa)$. Now it is remaining to prove that $\mu_\clB$ is orthogonal. Consider a Borel measurable subset $E$ of $\ucpa$, then 
\begin{align*}
\int_{E} \la \phi^\prime(a)h_1, h_2 \ra \, \mathrm{d} \mu_\clB &= \int_{g^{-1}(E)} \la V_p^*\rho_p(a)V_ph_1, h_2 \ra \, \mathrm{d} \nu &\\
&= \int_{g^{-1}(E)} \la \rho_p(a)V_ph_1, V_ph_2 \ra \, \mathrm{d} \nu &\\
&= \la \chi_{g^{-1}(E)} \left (\diX \rho_p(a)V_ph_1 \ \mathrm{d} \nu \right), \diX V_ph_2 \, \mathrm{d} \nu \ra &\\
&= \la \chi_{g^{-1}(E)} \rho(a)Vh_1, Vh_2 \ra &\\
&= \la V^* \chi_{g^{-1}(E)} \rho(a)Vh_1, h_2 \ra
\end{align*}
where $\chi_{g^{-1}(E)}$ is a characteristic projection with respect to $g^{-1}(E)$ and $\chi_{g^{-1}(E)} \in \clB \subseteq \acom$. Similarly
\begin{equation*}
\int_{E^\complement} \la \phi^\prime(a)h_1, h_2 \ra \, \mathrm{d} \mu_\clB = \la V^* \chi_{g^{-1}(E^\complement)} \rho(a)Vh_1, h_2 \ra
\end{equation*}
with $\chi_{g^{-1}(E^\complement)} = 1_K - \chi_{g^{-1}(E)} \in \clB \subseteq \acom$. The set $E$ was an arbitrary measurable subset of $\ucpa$, and hence from Definition \ref{def;om}, we conclude that the measure $\mu_\clB \in \mathcal{O}_\phi(\ucpa)$. 

We have defined the correspondences $\mu \mapsto \clB_\mu$ and $\clB \mapsto \mu_\clB$. If we start with $\mu \in \mathcal{O}_\phi(\ucpa)$, then from Proposition \ref{prop;om} and Remark \ref{rem;avna} we get that $\clB_\mu$ is isomorphic to $\text{L}^\infty(\ucpa, \mu)$. Then from $\clB_\mu$ we obtain the orthogonal measure, say $\mu_{\clB_\mu}$ using the disintegration of $K$ and $\rho$ with respect to $\clB_\mu$. But here $K$ and $\rho$ disintegrate as $\di K_{\phi^\prime} \, \mathrm{d} \mu$ and $\di \rho_{\phi^\prime} \, \mathrm{d} \mu$ respectively with respect to the abelian subalgebra $\clB_\mu$. Therefore, the measurable map $g : \ucpa \rightarrow \ucpa$ that has been used to obtain the pushforward measure $\mu_{\clB_\mu}$ is just the identity map on $\ucpa$. Therefore, the pushforward measure $\mu_{\clB_\mu}$ is same as the measure $\mu$.

If we start with an orthogonal abelian subalgebra $\clB \subseteq \acom$, then we get an orthogonal measure $\mu_\clB$ using the disintegration of $K$ and $\rho$ with respect to $\clB$. Suppose $K$ and $\rho$ disintegrate as $\diX K_p \, \mathrm{d} \nu$ and $\diX \rho_p \, \mathrm{d} \nu$ respectively  with respect to $\clB$. The pushforward measure $\mu_\clB$ is obtained by using the measurable map $g : X \rightarrow \ucpa$ that is given by $p \mapsto V^*_p \rho_p V_p$. Also, we have $\clB$ is isomorphic to $\text{L}^\infty(\ucpa, \mu_\clB)$. Since $\mu_\clB$ is orthogonal, by using Lemma \ref{lem;kmu} and Proposition \ref{prop;om}, we get the map $k_{\mu_\clB} : \text{L}^\infty(\ucpa, \mu_\clB) \rightarrow \clB_{\mu_\clB}$ given by for $a, b \in A$ and $h_1, h_2 \in H$
\begin{equation*}
\la k_{\mu_\clB}(f) \rho(a) Vh_1, \rho(b) Vh_2 \ra = \int\limits_{\ucpa} f(\phi^\prime) \la \phi^\prime (b^*a)h_1, h_2 \ra  \, \mathrm{d} \mu_\clB.
\end{equation*} 
But here observe, 
\begin{flalign*}
\la k_{\mu_\clB}(f) \rho(a) Vh_1, \rho(b) Vh_2 \ra & = \int\limits_{\ucpa} f(\phi^\prime) \la \phi^\prime (b^*a)h_1, h_2 \ra \mathrm{d} \mu_\clB & \\ 
& = \int\limits_{X} f \circ g(p) \la V^*_p \rho_p(b^*a) V_p h_1, h_2 \ra  \mathrm{d} \nu & \\
& = \la \int\limits_{X} f \circ g(p) \int\limits_{X} \rho_p(a) V_p h_1, \int\limits_{X} \rho_p(b) V_p h_2 \ra.
\end{flalign*}
Thus, we can identify $k_{\mu_\mathcal{B}}(f)$ with $f \circ g \in \text{L}^\infty(X, \nu)$ and hence $k_{\mu_\mathcal{B}}(f) \in \mathcal{B}$ (here, we conclude this by using the unitary from Theorem \ref{thm;dint}). However, the image $k_{\mu_\mathcal{B}}(f) \in \mathcal{B}_{\mu_\mathcal{B}}$. Thus, the two abelian subalgebras $\mathcal{B}$ and $\mathcal{B}_{\mu_\mathcal{B}}$ are equal. Therefore, we have a one-one correspondence between the two sets.
\end{proof}

The following remark clarifies that the construction of an orthogonal measure on $\ucpa$ using an orthogonal abelian subalgebra does not use the second condition in Definition \ref{def;oasa}. If we have an abelian subalgebra of $\acom$ satisfying the first condition in Definition \ref{def;oasa} that is sufficient to construct an orthogonal measure on $\ucpa$. But the corresponding abelian subalgebra with respect to the constructed orthogonal measure need not be the same as the one which we started with. The second condition in Definition \ref{def;oasa} indeed helps in proving the one-one correspondence in Theorem \ref{thm;omoas}.

\begin{remark} \label{rem;soasa}
Let $\clB$ be an abelian subalgebra of $\acom$ such that it satisfies only the first condition in Definition \ref{def;oasa}. That is the operator $V : H \rightarrow K = \diX K_p \, \mathrm{d} \nu$ can be written as $V = \diX V_p \, \mathrm{d} \nu$ where $V_p : H \rightarrow K_p$ is an isometry for almost every $p \in X$. Define a map $g : X \rightarrow \ucpa$ as $p \mapsto V_p^* \rho_p V_p$ except on a measure zero subset of $X$. Remark \ref{rem;mzs} clarifies that the choice of the measure zero set does not make difference. Let $\mu_\clB$ be the pushforward measure defined on $\ucpa$ using the measurable map $g$. Now the same computations given in the proof of Theorem \ref{thm;omoas} prove that $\mu_\clB \in \mathcal{O}_\phi(\ucpa)$. Corresponding to $\mu_\clB$, we have an orthogonal abelian subalgebra $\clB_{\mu_\clB}$. Then we get a map $k_{\mu_\clB} : \text{L}^\infty(\ucpa, \mu_\clB) \rightarrow \clB_{\mu_\clB}$ and from the last part of the proof of Theorem \ref{thm;omoas} we know that
$k_{\mu_\mathcal{B}}(f) \in \mathcal{B}$ for all $f \in \text{L}^\infty(\ucpa, \mu_\mathcal{B})$. Moreover, $k_{\mu_\clB}$ is a *-isomorphism onto $\clB_{\mu_\clB}$ which implies that the abelian subalgebra $\clB_{\mu_\clB}$ is contained in $\clB$ but it need not be equal to $\clB$. This naturally leads to the following definition below.
\end{remark}

\begin{definition} \label{def;soasa}
Let $\phi \in \ucpa$ and $V^* \rho V$ be the minimal Stinespring dilation of $\phi$ with $V : H \rightarrow K$. Let $\clB$ be an abelian subalgebra of $\acom$. Suppose $K$ and $\rho$ disintegrate as $\diX K_p \, \mathrm{d} \nu$ and $\diX \rho_p \, \mathrm{d} \nu$ respectively with respect to the abelian subalgebra $\clB$. Then we say $\clB$ is a sub-orthogonal abelian subalgebra of $\acom$ if the operator $V : H \rightarrow K = \diX K_p \, \mathrm{d} \nu$ can be written as $V = \diX V_p \, \mathrm{d} \nu$, where $V_p : H \rightarrow K_p$ is an isometry for almost every $p \in X$.
\end{definition}

Clearly, every orthogonal abelian subalgebra is sub-orthogonal. Remark \ref{rem;soasa} implies that with respect to a sub-orthogonal abelian subalgebra of $\acom$ we get an orthogonal measure with barycenter $\phi$. However, we do not use sub-orthogonality in the rest of this article.

\section{Examples of Generalized Orthogonal Measures} \label{S4}
In this section, we give some examples of unital completely positive maps with ranges in $M_n(\bbC)$, admitting barycentric orthogonal measures. We recall that for a given C*-algebra $A$, the set $S(A)$ denotes the state space of $A$. Consider the set $S(A)$ with with the weak*-topology. 

\begin{example} \label{ex;nn}
Let $\omega : A \rightarrow \bbC$ be a state. Let $(\pi_\omega, H_\omega, \Omega_\omega)$ be the corresponding GNS triple. For $n \in \bbN$, consider $\phi^n_\omega : M_n(A) \rightarrow M_n(\bbC)$ defined by 
\begin{equation*}
\phi^n_\omega ([a_{i,j}]) := [(\omega(a_{i,j}))_{i,j}].
\end{equation*}
Then $\phi^n_\omega$ is a unital completely positive map and suppose $V^* \rho V$ is the minimal Stinespring dilation of $\phi^n_\omega$, where $V : \bbC^n \rightarrow K$ is an isometry. For each $\omega^\prime \in S(A)$ we have a unital completely positive map $\phi^n_{\omega^\prime} \in \ucpann$ which is defined similarly as above. Define a measurable function $g : S(A) \rightarrow \ucpann$ as:
\begin{equation*}
g(\omega^\prime) : = \phi^n_{\omega^\prime}.
\end{equation*}
Let $\mu$ be a measure on the state space $S(A)$ of $A$ with barycenter $\omega$. Then we have
\begin{equation*}
\phi^n_\omega ([a_{i,j}]) = [(\omega(a_{i,j}))_{i,j}] = \left [ \left (\int_{S(A)} \omega^\prime(a_{i,j}) \, \mathrm{d} \mu \right ) _{i,j}\right ].
\end{equation*}
Suppose $\tilde{\mu}$ is a pushforward measure on $\ucpann$ obtained by the function $g$. Then for $h, k \in \bbC^n$ we get
\begin{equation*}
\la \phi^n_\omega ([a_{i,j}]) h, k \ra = \int\limits_{\ucpann} \la \phi^\prime([a_{i,j}])h, k \ra \, \mathrm{d} \tilde{\mu}.
\end{equation*}
That is $\tilde{\mu} \in M_{\phi^n_\omega}(\ucpann)$. 

Suppose $\mu$ is an orthogonal measure on $S(A)$ with barycenter $\omega$. Then we claim that $\tilde{\mu} \in \mathcal{O}_{\phi^n_\omega}(\ucpann)$. For showing this we use Proposition \ref{prop;om}. We show that the map $k_{\tilde{\mu}} : \text{L}^\infty (\ucpann, \tilde{\mu}) \rightarrow \rho(M_n(A))^\prime$ is a *-isomorphism onto its range. First, we prove that for any projection $f \in \text{L}^\infty (\ucpann, \tilde{\mu})$ the image $k_{\tilde{\mu}}(f)$ is also a projection. Since $\mu$ is an orthogonal measure, we get a *-isomorphism $k_\mu : \text{L}^\infty (S(A), \mu) \rightarrow \pi_\omega(A)^\prime$ onto its range \cite[Proposition 4.1.22]{BR1}. 

Let $f \in \text{L}^\infty (\ucpann, \tilde{\mu})$ be a projection. Then for $[a_{i,j}] \in M_n(A)$ and $h = (h_1, h_2, ..., h_n), ~ k = (k_1, k_2, ..., k_n) \in \bbC^n$ we have
\begin{align*}
\la k_{\tilde{\mu}}(f) \rho([a_{i,j}])Vh, Vk \ra &= \int_{\ucpann} f(\phi^\prime) \la \phi^\prime([a_{i,j}])h, k \ra \, \mathrm{d} \tilde{\mu} &\\
&= \int_{S(A)} f \circ g(\omega^\prime) \la [(\omega^\prime(a_{i,j}))_{i,j}]h, k \ra \, \mathrm{d} \mu &\\
&= \int_{S(A)} f \circ g (\omega^\prime) \sum_{i,j,l,m=1}^{n} \omega^\prime(a_{i,j})h_l\bar{k}_m  \, \mathrm{d} \mu &\\
&= \sum_{i,j,l,m =1}^{n} \la k_\mu(f \circ g)\pi_\omega(a_{i,j})h_l \Omega_\omega, k_m \Omega_\omega \ra &\\
&= \la D [\pi_\omega(a_{i,j})_{i,j}] \begin{bmatrix}
h_1 \Omega_\omega \\
\vdots \\
h_n \Omega_\omega
\end{bmatrix}, \begin{bmatrix}
k_1 \Omega_\omega \\
\vdots \\
k_n \Omega_\omega
\end{bmatrix} \ra
\end{align*}
where
\begin{equation*}
D = \begin{bmatrix}
k_\mu(f\circ g)       &0 &\ldots  &0\\
0 & k_\mu(f\circ g)      &\ddots  &\vdots\\
\vdots  &\ddots  &k_\mu(f\circ g)       &0\\
0 &\ldots  &0 &k_\mu(f\circ g)
\end{bmatrix}.
\end{equation*}

The Hilbert space $K$ is the closed linear span of the elements of the form $\{ \rho([a_{i,j}])Vh ~|~ [a_{i,j}] \in M_n(A), ~ h \in \mathbb{C}^n \}$. Using this, we define a map, say $U^n _\omega : K \rightarrow  \underset{n-times}{\underbrace{H_\omega \oplus \dots \oplus H_\omega}}$ by 
\begin{equation*}
U^n _\omega (\rho([a_{i,j}])Vh) = [\pi_\omega(a_{i,j})_{i,j}][h_i \Omega_\omega].
\end{equation*}  
(Here, we denote an element of the Hilbert space $\underset{n-times}{\underbrace{H_\omega \oplus \dots \oplus H_\omega}}$ as an $n \times 1$ column vector.) First, we check $U^n_\omega$ is an isometry. For this, consider $[a_{i,j}], [b_{i,j}] \in M_n(A)$ and $h, l \in \mathbb{C}^n$. Then, we have,
\begin{align*}
\la \rho([a_{i,j}])Vh, \rho([b_{i,j}])Vl \ra &= \la [a_{i,j}] \otimes h, [b_{i,j}] \otimes l \ra &\\
&= \la \phi^n_\omega( [b_{i,j}]^*[a_{i,j}]) h, l \ra & \\
&= \sum_{i,j,k=1}^{n} \omega(b^*_{i,j}a_{i,k})h_k \bar{l}_j.
\end{align*}
Next, observe 
\begin{align*}
[\pi_\omega(a_{i,j})_{i,j}][h_i \Omega_\omega] = \begin{bmatrix}
\sum\limits_{j=1}^{n} \pi_\omega(a_{1,j})h_j\Omega_\omega \\
\sum\limits_{j=1}^{n} \pi_\omega(a_{2,j})h_j\Omega_\omega \\
\vdots \\
\sum\limits_{j=1}^{n} \pi_\omega(a_{n,j})h_j\Omega_\omega \\
\end{bmatrix} = \begin{bmatrix}
\sum\limits_{j=1}^{n} a_{1,j} \otimes h_j \\
\sum\limits_{j=1}^{n} a_{2,j} \otimes h_j \\
\vdots \\
\sum\limits_{j=1}^{n} a_{n,j} \otimes h_j \\
\end{bmatrix}
\end{align*}
and 
\begin{align*}
[\pi_\omega(b_{i,j})_{i,j}][l_i \Omega_\omega] = \begin{bmatrix}
\sum\limits_{j=1}^{n} \pi_\omega(b_{1,j})l_j\Omega_\omega \\
\sum\limits_{j=1}^{n} \pi_\omega(b_{2,j})l_j\Omega_\omega \\
\vdots \\
\sum\limits_{j=1}^{n} \pi_\omega(b_{n,j})l_j\Omega_\omega \\
\end{bmatrix} = \begin{bmatrix}
\sum\limits_{j=1}^{n} b_{1,j} \otimes l_j \\
\sum\limits_{j=1}^{n} b_{2,j} \otimes l_j \\
\vdots \\
\sum\limits_{j=1}^{n} b_{n,j} \otimes l_j \\
\end{bmatrix}.
\end{align*}
Then
\begin{flalign*}
\la [\pi_\omega(a_{i,j})_{i,j}][h_i \Omega_\omega], [\pi_\omega(b_{i,j})_{i,j}][l_i \Omega_\omega] \ra =& \la \sum\limits_{j=1}^{n} a_{1,j} \otimes h_j, \sum\limits_{j=1}^{n} b_{1,j} \otimes l_j  \ra + \dots   &\\
& + \la \sum\limits_{j=1}^{n} a_{n,j} \otimes h_j, \sum\limits_{j=1}^{n} b_{n,j} \otimes l_j \ra & \\
=& \sum_{i,j,k=1}^{n} \omega(b^*_{i,j}a_{i,k})h_k \bar{l}_j.
\end{flalign*}
Thus, we have shown that
\begin{align*}
\la \rho([a_{i,j}])Vh, \rho([b_{i,j}])Vl \ra = \la [\pi_\omega(a_{i,j})_{i,j}][h_i \Omega_\omega], [\pi_\omega(b_{i,j})_{i,j}][l_i \Omega_\omega] \ra.
\end{align*}
This implies that the map $U^n_\omega$ is an isometry from the Hilbert space $K$ into $\underset{n-times}{\underbrace{H_\omega \oplus \dots \oplus H_\omega}}$. Also, one can note that $U^n_\omega$ is a surjective map onto the Hilbert space $\underset{n-times}{\underbrace{H_\omega \oplus \dots \oplus H_\omega}}$. Therefore, $U^n_\omega$ defines an unitary between the Hilbert spaces $K$ and $\underset{n-times}{\underbrace{H_\omega \oplus \dots \oplus H_\omega}}$.

Using this, we identify $k_{\tilde{\mu}}(f) = D$ and $\rho([a_{i,j}])Vh = [\pi_\omega(a_{i,j})_{i,j}][h_i \Omega_\omega]$. We know that $k_\mu(f \circ g)$ is a projection and hence $k_{\tilde{\mu}}(f)$ is also a projection. 

If $f_1$ and $f_2$ are orthogonal projections in $\text{L}^\infty(\ucpann, \mu)$, then $f_1 \leq 1-f_2$. Since $k_{\tilde{\mu}}$ is a linear positive map we get $k_{\tilde{\mu}}(f_1) \leq 1_K - k_{\tilde{\mu}}(f_2)$ which implies $k_{\tilde{\mu}}(f_1)k_{\tilde{\mu}}(f_2) = 0$. If $f_1$ and $f_2$ are arbitrary projections in $\text{L}^\infty(\ucpann, \mu)$, then repeating the same arguments from the proof of Proposition \ref{prop;om} we get $k_{\tilde{\mu}}(f_1f_2) = k_{\tilde{\mu}}(f_1)k_{\tilde{\mu}}(f_2)$. Again the similar arguments as in the proof of Proposition \ref{prop;om} imply that $k_{\tilde{\mu}}$ is a *-isomorphism onto its range. Therefore $\tilde{\mu}\in \mathcal{O}_{\phi^n_\omega}(\ucpann)$. 
\end{example}

\begin{example} \label{ex;1n}
Let $\omega : A \rightarrow \bbC$ be a state and $(\pi_\omega, H_\omega, \Omega_\omega)$ be the GNS triple corresponding to $\omega$. For $n \in \bbN$, define a map $\phi_{n,\omega} : A \rightarrow M_n(\bbC)$ by 
\begin{equation*}
\phi_{n,\omega} (a) := \begin{bmatrix}
\omega(a)       &0 &\ldots  &0\\
0 & \omega(a)      &\ddots  &\vdots\\
\vdots  &\ddots  & \omega(a)       &0\\
0 &\ldots  &0 & \omega(a)
\end{bmatrix}.
\end{equation*}
Then $\phi_{n,\omega}$ is a unital completely positive map. Suppose $V^* \rho V$ is the minimal Stinespring dilation of $\phi_{n,\omega}$, where $V : \bbC^n \rightarrow K$ is an isometry. Similarly, for each $\omega^\prime \in S(A)$ we have a unital completely positive map $\phi_{n, \omega^\prime} \in \ucpan$ as defined above. Define a measurable function $g : S(A) \rightarrow \ucpan$ as:
\begin{equation*}
g(\omega^\prime) : = \phi_{n, \omega^\prime}.
\end{equation*}
Let $\mu$ be a measure on the state space $S(A)$ of $A$ such that $\omega = \int_{S(A)} \omega^\prime \, \mathrm{d} \mu$. Then we have
\begin{align*}
\phi_{n, \omega} (a) &= \begin{bmatrix}
\omega(a)       &0 &\ldots  &0\\
0 & \omega(a)      &\ddots  &\vdots\\
\vdots  &\ddots  & \omega(a)       &0\\
0 &\ldots  &0 & \omega(a)
\end{bmatrix} \\
&= \begin{bmatrix}
\int\limits_{S(A)} \omega^\prime(a) \, \mathrm{d} \mu      &0 &\ldots  &0\\
0 & \int\limits_{S(A)} \omega^\prime(a) \, \mathrm{d} \mu      &\ddots  &\vdots\\
\vdots  &\ddots  & \int\limits_{S(A)} \omega^\prime(a) \, \mathrm{d} \mu   &0\\
0 &\ldots  &0 & \int\limits_{S(A)} \omega^\prime(a) \, \mathrm{d} \mu
\end{bmatrix}.
\end{align*}
Let $\tilde{\mu}$ be a pushforward measure on $\ucpan$ defined using the function $g$. Then for $h, k \in \bbC^n$ we get
\begin{equation*}
\la \phi_{n, \omega} (a) h, k \ra = \int\limits_{\ucpan} \la \phi^\prime(a)h, k \ra \, \mathrm{d} \tilde{\mu}.
\end{equation*}
That is $\tilde{\mu} \in M_{\phi_{n, \omega}}(\ucpan)$. 

Suppose $\mu$ is an orthogonal measure on $S(A)$ with barycenter $\omega$. Then we claim that $\tilde{\mu} \in \mathcal{O}_{\phi_{n, \omega}}(\ucpan)$. For showing this, we use the same technique from the previous example. We show that the map $k_{\tilde{\mu}} : \text{L}^\infty (\ucpan, \tilde{\mu}) \rightarrow \rho(A)^\prime$ is a *-isomorphism onto its range. Since $\mu$ is an orthogonal measure we get a *-isomorphism $k_\mu : \text{L}^\infty (S(A), \mu) \rightarrow \pi_\omega(A)^\prime$ onto its range \cite[Proposition 4.1.22]{BR1}. 

Let $f \in \text{L}^\infty (\ucpan, \tilde{\mu})$ be a projection. Then for $a \in A$ and $h = (h_1, h_2, ..., h_n), ~ k = (k_1, k_2, ..., k_n) \in \bbC^n$ we have
\begin{flalign*}
& \la k_{\tilde{\mu}}(f) \rho(a)Vh, Vk \ra \\
&= \int_{\ucpan} f(\phi^\prime) \la \phi^\prime(a)h, k \ra \, \mathrm{d} \tilde{\mu} & \\
&=  \int_{S(A)} f \circ g (\omega^\prime) \la \begin{bmatrix}
\omega^\prime(a)       &0 &\ldots  &0\\
0 & \omega^\prime(a)      &\ddots  &\vdots\\
\vdots  &\ddots  & \omega^\prime(a)       &0\\
0 &\ldots  &0 & \omega^\prime(a)
\end{bmatrix} h, k \ra \, \mathrm{d} \mu & \\
&= \int_{S(A)} f \circ g (\omega^\prime) \sum_{i=1}^{n} \omega^\prime(a)h_i\bar{k}_i  \, \mathrm{d} \mu & \\
&= \sum_{i=1}^{n} \la k_\mu(f \circ g)\pi_\omega(a)h_i \Omega_\omega, k_i \Omega_\omega \ra & \\
&= \la D \begin{bmatrix}
\pi_\omega(a)       &0 &\ldots  &0\\
0 & \pi_\omega(a)      &\ddots  &\vdots\\
\vdots  &\ddots  & \pi_\omega(a)      &0\\
0 &\ldots  &0 & \pi_\omega(a)
\end{bmatrix} \begin{bmatrix}
h_1 \Omega_\omega \\
\vdots \\
\vdots \\
h_n \Omega_\omega
\end{bmatrix}, \begin{bmatrix}
k_1 \Omega_\omega \\
\vdots \\
\vdots \\
k_n \Omega_\omega
\end{bmatrix} \ra
\end{flalign*}
where
\begin{equation*}
D = \begin{bmatrix}
k_\mu(f\circ g)       &0 &\ldots  &0\\
0 & k_\mu(f\circ g)      &\ddots  &\vdots\\
\vdots  &\ddots  &k_\mu(f\circ g)       &0\\
0 &\ldots  &0 &k_\mu(f\circ g)
\end{bmatrix}.
\end{equation*}
Now, identify $k_{\tilde{\mu}}(f) = D$ and 
\begin{equation*}
\rho(a)Vh = \begin{bmatrix}
\pi_\omega(a)       &0 &\ldots  &0\\
0 & \pi_\omega(a)      &\ddots  &\vdots\\
\vdots  &\ddots  & \pi_\omega(a)      &0\\
0 &\ldots  &0 & \pi_\omega(a)
\end{bmatrix} \begin{bmatrix}
h_1 \Omega_\omega \\
\vdots \\
\vdots \\
h_n \Omega_\omega
\end{bmatrix}.
\end{equation*}
We know that $k_\mu(f \circ g)$ is a projection and hence $k_{\tilde{\mu}}(f)$ is also a projection. 

Again the similar method as in the previous example shows that  $k_{\tilde{\mu}}$ is a *-isomorphism onto its range. Hence by applying Proposition \ref{prop;om} we conclude that $\tilde{\mu}\in \mathcal{O}_{\phi_{n, \omega}}(\ucpan)$.
\end{example}


\section*{Acknowledgement} 
The authors would like to sincerely thank the referee for a very detailed report and constructive suggestions on the first draft of this article which helped in vastly improving the manuscript.

The first named author is partially supported by Science and Engineering Board (DST, Govt. of India) grant no. ECR/2018/000016 and the second named author is supported by CSIR PhD scholarship with award letter no. 09/1020(0142)/2019-EMR-I.

\subsection*{Declaration}
The authors declare that there is no conflict of interest.

\bibliographystyle{plain}
\bibliography{mybib}   

	






\end{document}